\newtheorem{theorem}{Theorem}[section]
\newtheorem{lemma}[theorem]{Lemma}
\newtheorem{proposition}[theorem]{Proposition}
\newtheorem{corollary}[theorem]{Corollary}
\theoremstyle{remark}
\theoremstyle{definition}
\newtheorem{definition}[theorem]{Definition}
\numberwithin{equation}{section}
\newcommand{\ubar}[1]{\underaccent{\bar}{#1}}
\newcommand{\et}{\quad\mbox{and}\quad}
\newcommand{\bC}{\mathbb{C}}
\newcommand{\bN}{\mathbb{N}}
\newcommand{\bQ}{\mathbb{Q}}
\newcommand{\bR}{\mathbb{R}}
\newcommand{\bZ}{\mathbb{Z}}
\newcommand{\cC}{{\mathcal{C}}}
\newcommand{\cG}{{\mathcal{G}}}
\newcommand{\cK}{{\mathcal{K}}}
\newcommand{\cS}{{\mathcal{S}}}
\newcommand{\dist}{\mathrm{dist}}
\newcommand{\homega}{\hat{\omega}}
\newcommand{\proj}{\mathrm{proj}}
\newcommand{\psibot}{{\ubar{\psi}}}
\newcommand{\psitop}{{\bar{\psi}}}
\newcommand{\rbot}{{\underline{r}}}
\newcommand{\rtop}{{\overline{r}}}
\newcommand{\sbot}{{\underline{s}}}
\renewcommand{\stop}{{\overline{s}}}
\newcommand{\tbigwedge}{{\textstyle{\bigwedge}}}
\newcommand{\tP}{\tilde{P}}
\newcommand{\tuP}{\tilde{\uP}}
\newcommand{\ua}{\mathbf{a}}
\newcommand{\ue}{\mathbf{e}}
\newcommand{\uL}{\mathbf{L}}
\newcommand{\uP}{\mathbf{P}}
\newcommand{\uu}{\mathbf{u}}
\newcommand{\ux}{\mathbf{x}}
\newcommand{\uy}{\mathbf{y}}
\newcommand{\uz}{\mathbf{z}}
\newcommand{\disp}{\displaystyle}
\newcommand{\vol}{\mathrm{vol}}
\begin{document}

\baselineskip=14.6pt

\title[On the exponents of best approximation]
{Spectrum of the exponents\\ of best rational approximation}
\author{Damien ROY}
\address{
   D\'epartement de Math\'ematiques\\
   Universit\'e d'Ottawa\\
   585 King Edward\\
   Ottawa, Ontario K1N 6N5, Canada}
\email{droy@uottawa.ca}
\subjclass[2010]{11J13}
\thanks{Work partially supported by NSERC}

\begin{abstract}
Using the new theory of W.~M.~Schmidt and L.~Summerer called
parametric geometry of numbers, we show that the going-up
and going-down transference inequalities of W.~M.~Schmidt
and M.~Laurent describe the full spectrum of the $n$ exponents
of best rational approximation to points in $\bR^{n+1}$.
\end{abstract}

\maketitle

%
%

\section{Introduction}
 \label{sec:intro}

The study of simultaneous rational approximation to points
in $\bR^n$ started with the basic estimates of Dirichlet
in \cite{Di1842}, and got full impetus in the years 1926-1938
through the transference theorems of Khintchine
\cite{Kh1926a,Kh1926b} and J\'arnik \cite{Ja1938}.
Around the same period of time, Mahler introduced new
tools in geometry of numbers and applied them to
these questions \cite{Ma1937}.  In 1967, Schmidt
\cite{Sc1967} enlarged the scope of the problem by studying
how a fixed vector subspace $A$ of $\bR^n$ or $\bC^n$ can
be approximated, in the ambient space, by vector subspaces
of a given dimension, defined over a fixed number field $K$.
For this purpose, he introduced the notion of height of such
a subspace and was lead to study several angles of
approximation depending on the dimension of $A$.  This important
work was recently revisited by Laurent \cite{La2009b}
in the case where $A$ is a one-dimensional subspace
of $\bR^n$ and where $K=\bQ$ is the field of rational numbers.
Then, $A$ is spanned by a single vector $\uu$ and
there is only one angle of approximation to consider.  This
lead Laurent to introduce a family of $n-1$ (ordinary)
exponents of approximation to points in $\bR^n$ (interpolating
between the two classical ones) and to recast the results of
Schmidt in that setting through a series of inequalities
relating these exponents.  The purpose of this paper is
to show that these inequalities describe the full spectrum
of these exponents, thus answering a question of Laurent
in \cite{La2009b}.  The proof uses the parametric geometry
of numbers introduced by Schmidt and Summerer in a series
of recent papers \cite{SS2009,SS2013a}, together with the
complements from \cite{R2015}.

In the next section, we recall the definition of these
exponents as well as the inequalities of Schmidt and
Laurent which link them, and we state our main result.
In Section \ref{sec:link}, we express these exponents
in the context of parametric geometry of numbers.
In Section \ref{sec:gen}, we introduce the notion of
generalized $n$-system and, using \cite{R2015},
we reduce the proof of our main result to a combinatorial
problem involving such systems.  We study a particular
family of generalized $n$-systems in Section \ref{sec:fam}
and use them in Section \ref{sec:proof} to complete the proof.

%
%

\section{Notation and main result}

For each integer $n\ge 1$, we view $\bR^n$ as an Euclidean
space for the standard inner product of two vectors $\ux$
and $\uy$ denoted $\ux\cdot\uy$.  We also view its exterior
algebra $\tbigwedge \bR^n = \oplus_{k=0}^n\tbigwedge^k\bR^n$
as an Euclidean space characterized by the property that,
for each orthonormal basis $\{\ue_1,\dots,\ue_n\}$ of $\bR^n$,
the products $\ue_{i_1}\wedge\cdots\wedge\ue_{i_k}$ with
$0\le k\le n$ and $1\le i_1<\cdots<i_k\le n$ form an
orthonormal basis of $\tbigwedge \bR^n$.  For each
$k=1,\dots,n$, we also denote by $\tbigwedge^k \bZ^n$
the lattice of $\tbigwedge^k \bR^n$ spanned by the products
$\ux_1\wedge\cdots\wedge\ux_k$ with $\ux_1,\dots,\ux_k\in\bZ^n$.
For $k=0$, we have $\tbigwedge^k \bR^n=\bR$ and we set
$\tbigwedge^0 \bZ^n=\bZ$.

We say that a vector subspace $S$ of $\bR^n$ is defined
over $\bQ$ if it is generated over $\bR$ by elements of $\bQ^n$.
For such a subspace $S$, a basis $\{\ux_1,\dots,\ux_k\}$
of $S\cap\bZ^n$ as a $\bZ$-module is also a basis of $S$
as a vector space over $\bR$ and, following Schmidt
\cite{Sc1967}, we define the \emph{height} of $S$ by
\[
 H(S) = \|\ux_1\wedge\cdots\wedge\ux_k\|,
\]
this being independent of the choice of $\{\ux_1,\dots,\ux_k\}$.

Given a non-zero point $\uu$ in $\bR^n$ and a non-zero
subspace $S$ of $\bR^n$, we define the \emph{projective
distance} between $\uu$ and $S$ by
\[
 \dist(\uu,S)
  = \frac{\|\uu\wedge\ux_1\wedge\cdots\wedge\ux_k\|}
         {\|\uu\|\,\|\ux_1\wedge\cdots\wedge\ux_k\|},
\]
where $\{\ux_1,\dots,\ux_k\}$ is any basis of $S$
over $\bR$.  Again this is independent of the choice
of the basis.  It is also given by
\[
 \dist(\uu,S) = \frac{\|\proj_{S^\perp}(\uu)\|}{\|\uu\|}
\]
where $\proj_{S^\perp}(\uu)$ denotes the orthogonal projection
of $\uu$ on the orthogonal complement $S^\perp$ of $S$.
Geometrically, it represents the sine of the smallest angle
between $\uu$ and a non-zero vector of $S$.  In the work of
Schmidt \cite{Sc1967}, this quantity is denoted
$\psi_1(A,S)$ or $\omega_1(A,S)$ where $A=\bR\uu$ is
the line spanned by $\uu$.

\begin{definition}
Let $n\ge 1$ and let $\uu\in\bR^{n+1}\setminus\{0\}$.
For each $j=0,\dots,n-1$, we denote by $\omega_j(\uu)$
(resp.\ $\homega_j(\uu)$) the supremum of all real numbers
$\omega$ such that, for arbitrarily large values of $Q$
(resp.\ for all sufficiently large values of $Q$), there
exists a vector subspace $S$ of $\bR^{n+1}$, defined
over $\bQ$, of dimension $j+1$, with
\[
 H(S)\le Q \et H(S)\dist(\uu,S) \le Q^{-\omega}.
\]
\end{definition}

In particular, we have $\omega_j(\uu)=\infty$ if $\uu$
belongs to a subspace $S$ of $\bR^{n+1}$ defined
over $\bQ$ of dimension $j+1$.  Otherwise, $\omega_j(\uu)$
is the supremum of all real numbers $\omega$ for which
there exist infinitely many subspaces $S$ of $\bR^{n+1}$,
defined over $\bQ$, of dimension $j+1$ with
\[
 \dist(\uu,S) \le H(S)^{-\omega-1}.
\]

Theorem 13 of \cite{Sc1967} shows that
\begin{equation}
 \label{results:eq:homega}
 \omega_j(\uu) \ge \homega_j(\uu) \ge \frac{j+1}{n-j}
 \quad
 (0\le j\le n-1).
\end{equation}
Thus $\omega_0(\uu),\dots,\omega_{n-1}(\uu)$ are
non-negative although possibly infinite.
The following result provides further inequalities
relating these exponents of approximation.

\begin{theorem}[Schmidt, Laurent]
 \label{results:thm:SL}
Let $n\in\bN^*$.  For any non-zero $\uu\in\bR^{n+1}$,
we have $\omega_0(\uu)\ge 1/n$ and
\begin{equation}
 \label{results:thm:SL:eq}
 \frac{j\omega_j(\uu)}{\omega_j(\uu)+j+1}
  \le \omega_{j-1}(\uu)
  \le \frac{(n-j)\omega_j(\uu)-1}{n-j+1}
 \quad
 (1\le j\le n-1),
\end{equation}
with the convention that the left-most ratio is equal
to $j$ if $\omega_j(\uu)=\infty$.
\end{theorem}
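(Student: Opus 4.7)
The plan is to derive the two inequalities separately by Minkowski-type convex-body arguments, in each case building a new approximating subspace from a given one.

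\emph{Going up} (right inequality). Start with a $\bQ$-rational subspace $S\subset\bR^{n+1}$ of dimension $j$ realising an exponent close to $\omega_{j-1}(\uu)$: $H(S)\le Q$ and $H(S)\dist(\uu,S)\le Q^{-\omega}$ for some $\omega$ just below $\omega_{j-1}(\uu)$. Setting $\Lambda_S:=S\cap\bZ^{n+1}$, the image of $\bZ^{n+1}$ in the $(n+1-j)$-dimensional quotient $V:=\bR^{n+1}/S$ is a lattice of covolume $1/H(S)$, while $\pi(\uu)\in V$ has length $\|\uu\|\dist(\uu,S)$. Apply Minkowski's theorem to a thin cylinder around the line $\bR\pi(\uu)$: for any parameter $T>0$, one finds a non-zero integer $q$ with $|q|\le T$ and a primitive $\ue\in\bZ^{n+1}\setminus S$ such that $\|q\pi(\uu)-\pi(\ue)\|\ll (H(S)\,T)^{-1/(n-j)}$. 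Standard manipulations with exterior products then give $H(S')\asymp H(S)\|\pi(\ue)\|$ and $H(S')\dist(\uu,S')\ll H(S)\dist(\uu,S)\cdot\|q\pi(\uu)-\pi(\ue)\|$, for $S':=S+\bR\ue$ of dimension $j+1$. Optimising $T$ so as to equate the two contributions delivers the going-up bound on $\omega_j(\uu)$.

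\emph{Going down} (left inequality). Given $S$ of dimension $j+1$ with $H(S)\le Q$ and $H(S)\dist(\uu,S)\le Q^{-\omega}$ for $\omega$ just below $\omega_j(\uu)$, write $\uu_S$ for the orthogonal projection of $\uu$ onto $S$, so that $\|\uu-\uu_S\|=\|\uu\|\dist(\uu,S)$ is very small. Inside $S$, endowed with the rank-$(j+1)$ lattice $\Lambda_S:=S\cap\bZ^{n+1}$ of covolume $H(S)$, a Dirichlet--Minkowski argument produces, for any parameter $R$, a $\bQ$-rational subspace $T\subset S$ of dimension $j$ with $H(T)\le R$ and $\dist(\uu_S,T)$ bounded by a suitable negative power of $R$ (depending on $H(S)$). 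Since $T\subset S$ forces $S^\perp\subset T^\perp$, one has the Pythagorean splitting $\|\proj_{T^\perp}\uu\|^2=\|\proj_{T^\perp}\uu_S\|^2+\|\proj_{S^\perp}\uu\|^2$, so that $\dist(\uu,T)\ll\max\{\dist(\uu_S,T),\,\dist(\uu,S)\}$. Balancing the two contributions by choosing $R$ appropriately yields the going-down bound.

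The main obstacle in both arguments is the careful tracking of norms, heights, and the primitivity of the chosen integer vectors to guarantee that the constructed subspace is spanned by its intersection with $\bZ^{n+1}$ and so has the expected height; the fractional-linear form of both bounds then emerges purely from the optimisation of the free parameter. The initial bound $\omega_0(\uu)\ge 1/n$ is just the classical Dirichlet theorem, and the convention that $j\omega_j(\uu)/(\omega_j(\uu)+j+1)$ equals $j$ at $\omega_j(\uu)=\infty$ corresponds to the limiting case where $\uu$ lies in a $\bQ$-rational subspace of dimension $j+1$: then $\omega_{j-1}(\uu)\ge j$ follows by applying \eqref{results:eq:homega} inside that subspace.
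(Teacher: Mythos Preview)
Your plan is the classical constructive route of Schmidt \cite{Sc1967}: build a larger (resp.\ smaller) approximating subspace from a given one via a Minkowski argument in the quotient (resp.\ inside $S$), and optimise a free parameter. The outline is sound, though the sketch is rough in places (the integer parameter $q$ in the going-up step is not really needed---one only wants a nonzero lattice point of the quotient near the line $\bR\pi(\uu)$; the exponent $1/(n-j)$ then comes from the $n-j$ directions transverse to that line inside the $(n{+}1{-}j)$-dimensional quotient). One terminological point: your labelling of ``going up'' and ``going down'' is swapped relative to the paper's attribution after the statement, where the left inequality is credited to Schmidt's Going-up theorem and the right one to Going-down.

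The paper's own proof, given as a complement at the very end of \S\ref{sec:proof}, is entirely different and does not construct any subspaces. It uses parametric geometry of numbers: by Theorem~\ref{gen:thm:nsys} there is an $(n{+}1)$-system $\uP$ on $[q_0,\infty)$ with $\uL_\uu-\uP$ bounded, so $\psibot_j(\uu)=\liminf_{q\to\infty}\psi_j\big(q^{-1}\uP(q)\big)$. Since $q^{-1}\uP(q)\in\bar\Delta^{(n+1)}$ for every $q$, the elementary simplex inequalities
\[
 \frac{\psi_j(\ua)}{j}\le\frac{\psi_{j+1}(\ua)}{j+1},
 \qquad
 \frac{1-\psi_j(\ua)}{n+1-j}\le\frac{1-\psi_{j+1}(\ua)}{n-j}
\]
pass to the $\liminf$, and Proposition~\ref{link:prop:omega} then converts the resulting bounds on $\psibot_j(\uu)$ into the inequalities~\eqref{results:thm:SL:eq}. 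Your approach is self-contained and yields the transference inequalities without any of the Schmidt--Summerer machinery; the paper's approach requires that machinery (in particular the nontrivial existence of an approximating $(n{+}1)$-system) but, once it is available, reduces the theorem to a two-line convexity observation and makes the link with the converse Theorem~\ref{results:thm:main} transparent.
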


The left inequality in \eqref{results:thm:SL:eq} follows
from the Going-up-theorem of Schmidt \cite[Theorem 9]{Sc1967}
while the right inequality follows from
his Going-down-theorem \cite[Theorem 10]{Sc1967}.
This was observed by Laurent in \cite{La2009b} who also
introduced the exponents $\omega_j(\uu)$ for that purpose.
In the same paper, Laurent also notes that each individual
inequality in \eqref{results:thm:SL:eq} is best possible
because their combination yields Khinchine's transference
inequalities which are known to be best possible.
He also deduces that, for each fixed $j\in\{0,\dots,n-1\}$,
the spectrum of $\omega_j$, namely the set of all possible
values $\omega_j(\uu)$, is the full interval
$[(j+1)/(n-j),\infty]$.  Although he restricts
to points $\uu$ with $\bQ$-linearly independent coordinates,
this is also true over the the set of all $\uu\neq 0$
in view of \eqref{results:eq:homega}.

An independent proof for the right inequality
in \eqref{results:thm:SL:eq} is given by Laurent
in \cite{La2009b} and, for both inequalities,
by Bugeaud and Laurent in \cite{BL2010}, again for
points $\uu$ with $\bQ$-linearly independent coordinates.

Our main result below answers
a question of Laurent in \cite{La2009b} by showing
that the inequalities of Theorem \ref{results:thm:SL}
describe the set of all possible values for
the $n$-tuples $(\omega_0(\uu),\dots,\omega_{n-1}(\uu))$.

\begin{theorem}
 \label{results:thm:main}
Let $n\in\bN^*$.  For any $\omega_0,\dots,\omega_{n-1}
\in [0,\infty]$ satisfying $\omega_0\ge 1/n$ and
\begin{equation}
 \label{results:thm:main:eq}
 \frac{j\omega_j}{\omega_j+j+1}
  \le \omega_{j-1}
  \le \frac{(n-j)\omega_j-1}{n-j+1}
 \quad
 (1\le j\le n-1),
\end{equation}
there exists a point $\uu\in\bR^{n+1}$ with $\bQ$-linearly
independent coordinates such that
\[
 \omega_j(\uu)=\omega_j \et \homega_j(\uu)=\frac{j+1}{n-j}
 \quad
 (0\le j\le n-1).
\]
\end{theorem}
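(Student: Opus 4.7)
The plan is to follow the roadmap outlined in the introduction: translate the analytic problem of realizing a tuple $(\omega_0,\dots,\omega_{n-1})$ into a combinatorial problem in the parametric geometry of numbers, and then construct a suitable trajectory by hand.

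First, following Schmidt--Summerer, I would attach to a candidate point $\uu\in\bR^{n+1}$ the one-parameter family of convex bodies $\cC(q)=\{\ux\in\bR^{n+1}:|\ux|\le 1,\ |\ux\cdot\uu|\le e^{-q}|\uu|\}$ (or the analogous normalization used in Section \ref{sec:link}), and let $L_1(q)\le\cdots\le L_{n+1}(q)$ be the logarithms of its successive minima. The first task is to rewrite each $\omega_j(\uu)$ and $\homega_j(\uu)$ as a $\limsup$/$\liminf$, respectively, of a specific expression in the $L_i$. The key observation is that a rational subspace $S$ of dimension $j+1$ close to $\uu$ in the sense of the definition corresponds, via duality on $\tbigwedge$, to an independent set of minimal vectors of $\cC(q)$ at the appropriate $q=\log Q$, with $H(S)$ and $H(S)\dist(\uu,S)$ controlled by $L_{n-j}$ and $L_{n-j+1}$ (up to bounded terms). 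This identifies the exponent $\omega_j(\uu)$ with $\limsup_{q\to\infty}(1-L_{n-j}(q)/q)/(L_{n-j}(q)/q+\ldots)$ and similarly for $\homega_j$.

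Second, by the main theorem of \cite{R2015}, every generalized $n$-system $\uP=(P_1,\dots,P_{n+1})$ is, up to a bounded additive error, the successive-minima map $\uL$ of some $\uu\in\bR^{n+1}$ with $\bQ$-linearly independent coordinates. Consequently, the problem reduces to the following purely combinatorial one: for any $(\omega_0,\dots,\omega_{n-1})$ satisfying $\omega_0\ge 1/n$ and \eqref{results:thm:main:eq}, produce a generalized $n$-system $\uP$ whose components have, for each $j$, a $\limsup$ ratio equal to the value associated to $\omega_j$ and a $\liminf$ ratio equal to the value associated to $(j+1)/(n-j)$.

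Third, I would build $\uP$ piecewise-linearly, alternating between two kinds of regimes on a fast-growing sequence of intervals $[q_k,q_{k+1}]$. On each interval one component $P_{n-j}$ is driven down to an exceptionally small value so that its slope over $[q_0,q_k]$ realises $\omega_j$, while on the complementary interval the trajectory is steered back to the unperturbed ``Dirichlet configuration'' where $P_i(q)/q$ is close to the equilibrium value $(i-1)/(n-i+1)$ dictated by Minkowski's second theorem; this regime is what yields $\homega_j(\uu)=(j+1)/(n-j)$. The family constructed in Section \ref{sec:fam} should provide the elementary building blocks: it presumably parametrises one-step dips achieving an arbitrary single exponent $\omega_j$, and the full construction is obtained by concatenating these blocks in a carefully chosen order so that each $\omega_j$ is attained infinitely often.

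The main obstacle is the consistency of the construction across different values of $j$. A generalized $n$-system has strong rigidity: the sum $P_1+\cdots+P_{n+1}$ grows essentially linearly, and the combinatorics of ``which index is currently carrying slope'' must obey the axioms of the system; so prescribing the behaviour of one $P_i$ forces constraints on all the others. The two-sided inequalities in \eqref{results:thm:main:eq} should appear precisely as the compatibility conditions guaranteeing that, after a block realising the target $\omega_j$, one can return to the Dirichlet equilibrium without violating the constraints that allow later blocks to realise $\omega_{j-1}$ and $\omega_{j+1}$. The boundary cases $\omega_j=\infty$ (corresponding to $\uu$ lying in a $\bQ$-rational subspace) and the equalities in \eqref{results:thm:main:eq} will need a limiting argument, taking the lengths of the dip intervals to infinity while keeping the return intervals controlled so that the coordinates of $\uu$ remain $\bQ$-linearly independent.
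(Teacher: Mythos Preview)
Your high-level plan matches the paper's: translate to parametric geometry of numbers via Section~\ref{sec:link}, reduce to building a suitable generalized $(n+1)$-system, then invoke \cite{R2015}. However, several concrete steps are off in ways that would block the argument.

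First, the link formula is not the one you wrote. By Proposition~\ref{link:prop:omega}, the exponent $\omega_j(\uu)$ is governed by the \emph{partial sum} $L_{\uu,1}+\cdots+L_{\uu,n-j}$, not by the single component $L_{n-j}$: one has $\omega_j(\uu)=1/\psibot_{n-j}(\uu)-1$ with $\psibot_k(\uu)=\liminf_{q\to\infty}q^{-1}(L_{\uu,1}(q)+\cdots+L_{\uu,k}(q))$. So the combinatorial target is to prescribe the $\liminf$ of each $\psi_j(q^{-1}\uP(q))=q^{-1}(P_1(q)+\cdots+P_j(q))$, not to drive one component $P_{n-j}$ down.

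Second, the ``Dirichlet equilibrium'' is the all-equal point $(q/(n+1),\dots,q/(n+1))$, giving $\psi_j=j/(n+1)$ and hence $\homega_j=(j+1)/(n-j)$; your formula $P_i(q)/q\approx (i-1)/(n-i+1)$ is not correct (and is not even in $[0,1]$ for $i=n+1$).

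Third, and most importantly, your plan to realise each $\omega_j$ by its own ``one-step dip'' is the right shape, but the paper isolates a subtlety you have not addressed: a single dip towards a point $\ua\in\bar\Delta^{(n+1)}$ fixes \emph{all} the values $\psi_1(\ua),\dots,\psi_n(\ua)$ at once, and there is in general no single $\ua$ with $\psi_j(\ua)=\psibot_j$ for every $j$ (the convexity constraint $\psi_{j-1}(\ua)+\psi_{j+1}(\ua)\ge 2\psi_j(\ua)$ can fail for the target tuple). The paper's Proposition~\ref{proof:prop1} resolves this by producing a \emph{finite set} $E\subset\bar\Delta^{(n+1)}$, one point for each consecutive pair $(\psibot_k,\psibot_{k+1})$, and it is exactly the inequalities \eqref{results:thm:main:eq} that make each such point lie in $\bar\Delta^{(n+1)}$. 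The system $\uP$ is then built (Proposition~\ref{fam:prop:pasting}) by cycling through dips towards the points of $E$ and returning to the all-equal point between dips.

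Finally, $\omega_j=\infty$ does \emph{not} correspond to $\uu$ lying in a rational subspace here; it is handled by allowing boundary points of $\bar\Delta^{(n+1)}$ (some $a_i=0$), and $\bQ$-linear independence of the coordinates of $\uu$ is deduced a posteriori from $\homega_{n-1}(\uu)=n<\infty$.
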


A description of the spectrum of the $2n$ exponents
$(\omega_0,\dots,\omega_{n-1},\homega_0,\dots,\homega_{n-1})$
was achieved by Laurent in \cite{La2009} for $n=2$ but,
for larger values of $n$, the question is open.
In \cite[\S1]{SS2013a}, Schmidt and Summerer propose
a different larger set of $2n+2$ exponents of approximation
to non-zero points in $\bR^n$.  Their spectrum also is
unknown, even for $n=2$.

%
%

\section{Link with parametric geometry of numbers}
\label{sec:link}

Fix $n\in\bN^*$ and $\uu\in\bR^{n+1}\setminus\{0\}$.
For each real number $Q\ge 1$, we form the convex body
\[
 \cC_\uu(Q)
 = \{\ux\in\bR^{n+1} \,;\, \|\ux\|\le 1, \ |\ux\cdot\uu| \le Q^{-1}\}
\]
and, for $j=1,\dots,n+1$, we denote by
$\lambda_j\big(\cC_\uu(Q)\big)$
its $j$-th minimum, namely the smallest real number
$\lambda>0$ such that $\lambda \cC_\uu(Q)$ contains
at least $j$ linearly independent points of $\bZ^{n+1}$.
On the model of Schmidt and Summerer in \cite{SS2013a},
we define
\[
 L_{\uu,j}(q) = \log \lambda_j(\cC_\uu(e^q))
 \quad
 (q\ge 0,\ 1\le j\le n+1),
\]
and we form the map $\uL_\uu\colon[0,\infty)\to \bR^{n+1}$
given by
\[
 \uL_\uu(q) = (L_{\uu,1}(q),\dots,L_{\uu,n+1}(q))
 \quad
 (q\ge 0).
\]
For each $j=1,\dots,n+1$, we also define
\[
 \psibot_{j}(\uu)
  = \liminf_{q\to\infty} \frac{L_{\uu,1}(q)+\cdots+L_{\uu,j}(q)}{q}
\et 
 \psitop_{j}(\uu)
 = \limsup_{q\to\infty} \frac{L_{\uu,1}(q)+\cdots+L_{\uu,j}(q)}{q}.
\]
The following result connects these quantities to those from
the previous section.

\begin{proposition}
 \label{link:prop:omega}
Let $n$ and $\uu$ be as above.  For each $j\in\{0,\dots,n-1\}$, we have
\[
 \omega_j(\uu)=\frac{1}{\psibot_{n-j}(\uu)}-1
 \et
 \homega_j(\uu)=\frac{1}{\psitop_{n-j}(\uu)}-1.
\]
\end{proposition}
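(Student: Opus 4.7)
The plan is to pass from the definition of $\omega_j(\uu)$ (in terms of rational subspaces $S$ of dimension $j+1$) to the dual language of rational subspaces $T=S^\perp$ of dimension $n-j$, and then to find such $T$ directly from the successive minima of $\cC_\uu(Q)$. Two observations drive this. First, $\bZ^{n+1}$ is self-dual under the Hodge star, giving $H(S)=H(S^\perp)$. Second, $\dist(\uu,S)=\|\proj_T(\uu)\|/\|\uu\|$. Hence the defining conditions ``$H(S)\le Q$ and $H(S)\dist(\uu,S)\le Q^{-\omega}$'' become ``$H(T)\le Q$ and $H(T)\|\proj_T(\uu)\|\le \|\uu\|\,Q^{-\omega}$''.

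For the inequality $\omega_j(\uu)\le 1/\psibot_{n-j}(\uu)-1$, fix a witnessing $T$ with $H(T)\le Q_0$ and $H(T)\|\proj_T(\uu)\|\le \|\uu\|\,Q_0^{-\omega}$. The lattice $T\cap\bZ^{n+1}$ has rank $n-j$ and covolume $H(T)$ inside $T$, so Minkowski's second theorem supplies $n-j$ linearly independent integer vectors $\uv_1,\dots,\uv_{n-j}\in T$ with $\prod\|\uv_i\|\ll H(T)$. Since $\uv_i\in T$, we have $\uv_i\cdot\uu=\uv_i\cdot\proj_T(\uu)$, so $|\uv_i\cdot\uu|\le\|\uv_i\|\,\|\proj_T(\uu)\|$. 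Choosing $q$ with $e^{-q}=\|\proj_T(\uu)\|/\|\uu\|$, the vector $\uv_i$ belongs to $\|\uv_i\|\cdot\cC_\uu(e^q)$ up to a bounded factor, giving $\lambda_i(\cC_\uu(e^q))\ll\|\uv_i\|$ for $i=1,\dots,n-j$. Summing logs yields $L_{\uu,1}(q)+\cdots+L_{\uu,n-j}(q)\le\log H(T)+O(1)\le\log Q_0+O(1)$, while $q=(\omega+1)\log Q_0+O(1)$. Letting $\omega\nearrow\omega_j(\uu)$ and $Q_0\to\infty$ produces $\psibot_{n-j}(\uu)\le 1/(\omega_j(\uu)+1)$.

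For the reverse inequality, take a sequence $q_N\to\infty$ realising the liminf in the definition of $\psibot_{n-j}(\uu)$, and at each $q_N$ select linearly independent integer vectors $\uv_i\in e^{L_{\uu,i}(q_N)}\cC_\uu(e^{q_N})$. Let $T$ be their span, a rational subspace of dimension $n-j$. Then $H(T)\le\|\uv_1\wedge\cdots\wedge\uv_{n-j}\|\le\prod\|\uv_i\|\le\exp\sum_{i=1}^{n-j}L_{\uu,i}(q_N)$. Combining the identity $\|\uv_1\wedge\cdots\wedge\uv_{n-j}\wedge\uu\|=\|\proj_{T^\perp}(\uu)\|\cdot\|\uv_1\wedge\cdots\wedge\uv_{n-j}\|$ with a Laplace expansion of the left-hand side in terms of the inner products $|\uv_i\cdot\uu|\le e^{L_{\uu,i}(q_N)-q_N}$---equivalently, invoking Mahler's compound-body construction in $\tbigwedge^{n-j}\bR^{n+1}$---gives $H(T)\|\proj_T(\uu)\|\ll\|\uu\|\,\exp(\sum L_i-q_N)$. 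Setting $Q=\exp\sum L_i$ and $S=T^\perp$ then certifies $\omega_j(\uu)\ge q_N/\sum L_i-1$, which tends to $1/\psibot_{n-j}(\uu)-1$.

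The identical argument with ``arbitrarily large $Q$'' replaced by ``all sufficiently large $Q$'' (and liminf by limsup) gives the companion formula $\homega_j(\uu)=1/\psitop_{n-j}(\uu)-1$. The main obstacle is the lower-bound step: the $\uv_i$ drawn from successive minima need not form a Minkowski-reduced basis of $T\cap\bZ^{n+1}$, so a direct Gram-matrix computation for $\|\proj_T(\uu)\|$ is too weak. The standard remedy, natural in the Schmidt--Summerer framework, is to transfer the comparison to the exterior algebra, where the first minimum of the $(n-j)$-th compound body already encodes the optimal trade-off between $H(T)$ and $\|\proj_T(\uu)\|$.
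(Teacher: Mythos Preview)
Your dual approach via $T=S^\perp$ is sound in principle, but the lower-bound step contains a genuine error. The identity you invoke, $\|\uv_1\wedge\cdots\wedge\uv_{n-j}\wedge\uu\|=\|\proj_{T^\perp}(\uu)\|\cdot\|\uv_1\wedge\cdots\wedge\uv_{n-j}\|$, is correct but yields the \emph{wrong} projection: you need $\|\proj_T(\uu)\|$, not $\|\proj_{T^\perp}(\uu)\|$. Moreover, the wedge $\uv_1\wedge\cdots\wedge\uv_{n-j}\wedge\uu$ has no Laplace expansion in the inner products $\uv_i\cdot\uu$; that expansion belongs to the \emph{interior product} $\iota_\uu(\uv_1\wedge\cdots\wedge\uv_{n-j})=\sum_i(-1)^{i-1}(\uu\cdot\uv_i)\,\uv_1\wedge\cdots\widehat{\uv_i}\cdots\wedge\uv_{n-j}$, whose norm for a decomposable tensor equals $\|\proj_T(\uu)\|\cdot\|\uv_1\wedge\cdots\wedge\uv_{n-j}\|$. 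With this correction your estimate $H(T)\|\proj_T(\uu)\|\ll\exp(\sum L_i-q_N)$ does follow, and your closing worry about ill-conditioned Gram matrices evaporates---no retreat to compound bodies is needed at this step. A second, smaller gap: for $\homega_j$ the phrase ``identical argument'' is too quick. In your upper-bound direction you fix $q$ as a function of the chosen $T$ via $e^{-q}=\|\proj_T(\uu)\|/\|\uu\|$; as $Q_0$ ranges over all large values, $q$ then takes only discretely many values, so you cannot directly conclude $\sum L_i(q)/q\le 1/(\omega+1)+o(1)$ for \emph{all} large $q$. You must instead fix $q$ first and take $Q_0=e^{q/(\omega+1)}$, then run the Minkowski argument in the resulting $T$.

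The paper proceeds differently and more uniformly. It first cites a lemma of Bugeaud and Laurent that reformulates $\omega_j(\uu)$ and $\homega_j(\uu)$ directly in $\tbigwedge^{j+1}\bR^{n+1}$: they are the suprema of $\omega$ for which $\|\uz\|\le Q$, $\|\uz\wedge\uu\|\le Q^{-\omega}$ has a non-zero solution $\uz\in\tbigwedge^{j+1}\bZ^{n+1}$ (for arbitrarily large, resp.\ all large, $Q$). Then Mahler's theory of compound and dual bodies gives the two-sided comparison $\lambda_1(\cK_\uu^{(j+1)}(Q))\asymp Q^{-1}\lambda_1(\cC_\uu(Q))\cdots\lambda_{n-j}(\cC_\uu(Q))$, and the single substitution $Q=e^{q/(\omega+1)}$ converts the solvability condition into $(L_{\uu,1}(q)+\cdots+L_{\uu,n-j}(q))/q\le 1/(\omega+1)$, handling both exponents and both inequalities at once without any subspace bookkeeping.
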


The proof relies on the following alternative definition
for the exponents $\omega_j(\uu)$ and $\homega_j(\uu)$.

\begin{lemma}[Bugeaud-Laurent]
 \label{link:lemma:BL}
Let $j\in\{0,\dots,n-1\}$.  Then $\omega_j(\uu)$
(resp.\ $\homega_j(\uu)$) is the supremum of all real
numbers $\omega$ such that the inequalities
\begin{equation}
 \label{link:lemma:BL:eq}
 \|\uz\|\le Q \et \|\uz\wedge\uu\|\le Q^{-\omega}
\end{equation}
have a non-zero solution $\uz\in\tbigwedge^{j+1}\bZ^{n+1}$
for arbitrarily large values of $Q$
(resp.\ for all sufficiently large values of $Q$).
\end{lemma}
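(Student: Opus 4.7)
The plan is to prove the two inequalities $\omega_j(\uu)\ge\omega_j^*(\uu)$ and $\omega_j(\uu)\le\omega_j^*(\uu)$, where $\omega_j^*(\uu)$ abbreviates the supremum described in the lemma; the argument for $\homega_j$ is entirely parallel, with ``for arbitrarily large $Q$'' replaced by ``for all sufficiently large $Q$''. The bridge between the two formulations is the following identification: if $S\subseteq\bR^{n+1}$ is a rational subspace of dimension $j+1$ with $\bZ$-basis $\ux_1,\dots,\ux_{j+1}$ of $S\cap\bZ^{n+1}$, then $\uz:=\ux_1\wedge\cdots\wedge\ux_{j+1}\in\tbigwedge^{j+1}\bZ^{n+1}$ satisfies $\|\uz\|=H(S)$ and $\|\uz\wedge\uu\|=\|\uu\|\,H(S)\,\dist(\uu,S)$ by the very definitions of $H(S)$ and of the projective distance recalled at the start of Section~2.

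\textbf{Easy direction ($\omega_j^*(\uu)\ge\omega_j(\uu)$).} For any $\omega<\omega_j(\uu)$ and any eligible subspace $S$ at scale $Q$, the decomposable vector $\uz$ above satisfies $\|\uz\|\le Q$ and $\|\uz\wedge\uu\|\le\|\uu\|Q^{-\omega}$. The constant $\|\uu\|$ costs an arbitrarily small decrement in $\omega$, so any $\omega<\omega_j(\uu)$ still qualifies under the integer-vector criterion, yielding $\omega_j^*(\uu)\ge\omega_j(\uu)$.

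\textbf{Hard direction ($\omega_j(\uu)\ge\omega_j^*(\uu)$).} Given a nonzero $\uz\in\tbigwedge^{j+1}\bZ^{n+1}$ with $\|\uz\|\le Q$ and $\|\uz\wedge\uu\|\le Q^{-\omega}$, we must produce a $(j+1)$-dimensional rational subspace $S$ with $H(S)\ll Q$ and $H(S)\dist(\uu,S)\ll Q^{-\omega}$. If $\uz$ is decomposable (which is automatic for $j=0$ and $j=n-1$), write $\uz=c\,\uy_1\wedge\cdots\wedge\uy_{j+1}$ with $c\in\bZ\setminus\{0\}$ and $\{\uy_i\}$ a $\bZ$-basis of the rational span $S$; the bounds for $H(S)$ and $H(S)\dist(\uu,S)$ transfer with only the harmless constant factor $\|\uu\|^{-1}$.

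The main obstacle is the non-decomposable case, which is genuine for $1\le j\le n-2$. The cleanest strategy is to reduce to the decomposable case via the rational subspace $K(\uz):=\{\ux\in\bR^{n+1}:\uz\wedge\ux=0\}$, which has dimension at most $j+1$ with equality iff $\uz$ is decomposable. When $\dim K(\uz)<j+1$, the inequality $\|\uz\wedge\uu\|\le Q^{-\omega}$ forces $\uu$ to lie approximately in $K(\uz)$; one then extends $K(\uz)$ to a rational $(j+1)$-dimensional subspace $S$ and transfers the bound $\dist(\uu,S)\le\dist(\uu,K(\uz))$. The delicate point is controlling $H(S)$ during this extension, which requires a Mahler/Minkowski-type estimate for the minimal height of a rational completion--equivalently, the existence of a decomposable $\uz'\in\tbigwedge^{j+1}\bZ^{n+1}$ with $\|\uz'\|\ll_n\|\uz\|$ and $\|\uz'\wedge\uu\|\ll_n\|\uz\wedge\uu\|$, extracted from the Pl\"ucker expansion of $\uz$ by a pigeonhole argument over the finitely many Pl\"ucker coordinates.
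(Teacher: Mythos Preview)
Your argument for the hard direction has a genuine gap. The claim that ``$\|\uz\wedge\uu\|\le Q^{-\omega}$ forces $\uu$ to lie approximately in $K(\uz)$'' fails when $\uz$ is not decomposable: the identity $\|\uz\wedge\uu\|=\|\uz\|\,\|\uu\|\,\dist(\uu,K(\uz))$ holds only for decomposable $\uz$. For a concrete counterexample in $\bR^4$ with $j=1$, take $\uz = N\,\ue_1\wedge \ue_2 + \ue_3\wedge \ue_4$ for a large integer $N$ and $\uu=\ue_1$: then $\|\uz\|\asymp N$ and $\|\uz\wedge\uu\|=\|\ue_3\wedge \ue_4\wedge \ue_1\|=1$, so the ratio $\|\uz\wedge\uu\|/\|\uz\|$ is as small as we like, yet $K(\uz)=\{0\}$ and $\uu$ is nowhere near it. Thus your chain $\dist(\uu,S)\le\dist(\uu,K(\uz))\ll\|\uz\wedge\uu\|/\|\uz\|$ breaks at the second inequality, and the subsequent ``height of a rational completion'' discussion never gets off the ground.

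Your final sentence does isolate the correct target---a decomposable $\uz'\in\tbigwedge^{j+1}\bZ^{n+1}$ with $\|\uz'\|\ll_n\|\uz\|$ and $\|\uz'\wedge\uu\|\ll_n\|\uz\wedge\uu\|$---but ``pigeonhole over the Pl\"ucker coordinates'' is not an argument: the individual terms $z_I\,\ue_I$ in the expansion of $\uz$ carry no information about $\|\ue_I\wedge\uu\|$. The paper itself does not prove the lemma; it defers to \cite[\S4]{BL2010}, where the proof goes through Mahler's theory of compound convex bodies. One shows (this is Lemma~3 of that reference, invoked again in the proof of Proposition~\ref{link:prop:omega}) that the body $\{\uz\in\tbigwedge^{j+1}\bR^{n+1}:\|\uz\|\le Q,\ \|\uz\wedge\uu\|\le 1\}$ is comparable to the $(j+1)$-th compound of $\{\ux\in\bR^{n+1}:\|\ux\|\le Q,\ \|\ux\wedge\uu\|\le 1\}$. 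Its first minimum over $\tbigwedge^{j+1}\bZ^{n+1}$ is therefore $\asymp\lambda_1\cdots\lambda_{j+1}$ for the latter body, and that product is realized, up to a constant, by the decomposable vector $\ux_1\wedge\cdots\wedge\ux_{j+1}$ built from linearly independent integer points achieving those successive minima. This is what supplies the needed $\uz'$ and closes the circle.
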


This is proved in \cite[\S4]{BL2010} for $\omega_j(\uu)$
in the case where $\uu$ has $\bQ$-linearly independent
coordinates but the same argument extends with little
change to any non-zero vector $\uu$ and applies also to
$\homega_j(\uu)$.

\begin{proof}[Proof of Proposition \ref{link:prop:omega}]
For each $j=0,\dots,n-1$ and each $Q\ge 1$, define
\[
 \cK^{(j+1)}_\uu(Q)
  = \{\uz\in\tbigwedge^{j+1}\bR^{n+1} \,;\,
      \|\uz\|\le Q, \ \|\uz\wedge\uu\|\le 1 \}.
\]
According to \cite[\S4, Lemma 3]{BL2010}, $\cK^{(j+1)}_\uu(Q)$
is comparable to the $(j+1)$-th compound body
of $\cK^{(1)}_\uu(Q)$.  As the latter is in turn
comparable to the dual of $\cC_\uu(Q)$ and as
$\vol(\cC_\uu(Q))\asymp Q^{-1}$, it follows
that the first minimum of $\cK^{(j+1)}_\uu(Q)$ with
respect to $\tbigwedge^{j+1}\bZ^{n+1}$ satisfies
\begin{align*}
 \lambda_1\big(\cK^{(j+1)}_\uu(Q)\big)
  &\asymp \lambda_1\big(\cK^{(1)}_\uu(Q)\big)\cdots
          \lambda_{j+1}\big(\cK^{(1)}_\uu(Q)\big) \\
  &\asymp \lambda_{n+1}\big(\cC_\uu(Q)\big)^{-1}\cdots
          \lambda_{n-j+1}\big(\cC_\uu(Q)\big)^{-1} \\
  &\asymp  Q^{-1} \lambda_1\big(\cC_\uu(Q)\big)
          \cdots\lambda_{n-j}\big(\cC_\uu(Q)\big)
   \qquad (0\le j\le n-1,\ Q\ge 1),
\end{align*}
with implied constants depending only on $n$ and $\uu$
(see \cite[\S\S14--15]{GL1987}).
Now the convex body of $\tbigwedge^{j+1}\bR^{n+1}$
defined by the conditions \eqref{link:lemma:BL:eq}
is $Q^{-\omega}\cK_\uu^{(j+1)}(Q^{\omega+1})$,
so its first minimum with respect to $\tbigwedge^{j+1}\bZ^{n+1}$
is bounded below and above by products of
\begin{equation}
 \label{link:prop:omega:eq}
 Q^{-1} \lambda_1\big(\cC_\uu(Q^{\omega+1})\big)\cdots
        \lambda_{n-j}\big(\cC_\uu(Q^{\omega+1})\big)
\end{equation}
with positive constants that are independent of $Q$
and of $\omega$, when $Q\ge 1$ and $\omega+1\ge 0$.
By Lemma \ref{link:lemma:BL}, $\omega_j(\uu)$
(resp.\ $\homega_j(\uu)$) is the supremum of all real
numbers $\omega$ for which this first minimum is $\le 1$
for arbitrarily large values of $Q$ (resp.\ for all
sufficiently large values of $Q$).
Since $\omega_j(\uu)\ge\homega_j(\uu)\ge 0$
by \eqref{results:eq:homega}, we may restrict to values
of $\omega$ with $\omega>-1$ and thus $\omega_j(\uu)$
(resp.\ $\homega_j(\uu)$) is also the supremum of all
real numbers $\omega>-1$ for which the product
in \eqref{link:prop:omega:eq} is $\le 1$ for arbitrarily
large values of $Q$ (resp.\ for all sufficiently large
values of $Q$).  However, upon writing $Q=e^{q/(\omega+1)}$
with $q>0$, the condition that this product is $\le 1$
amounts to
\[
  \frac{L_{\uu,1}(q)+\cdots+L_{\uu,n-j}(q)}{q}
  \le \frac{1}{\omega+1},
\]
and the conclusion follows.
\end{proof}

%
%

\section{Generalized $n$-systems}
\label{sec:gen}

Fix an integer $n\ge 2$.  In \cite[\S3]{SS2013a},
Schmidt and Summerer propose the notion of
$(n,0)$-system as a model for the general behavior of the maps
$\uL_\uu\colon[0,\infty)\to\bR^n$ attached to non-zero points
$\uu\in\bR^n$.  The following is a version of this
adapted to the present context.

\begin{definition}
 \label{gen:def1}
Let $I$ be a subinterval of $[0,\infty)$ with non-empty interior.
An $n$-system on $I$ is a continuous piecewise linear map
$\uP=(P_1,\dots,P_n)\colon I\to\bR^n$ with the following
properties.
\begin{itemize}
 \item[(S1)] For each $q\in I$, we have
   $0\le P_1(q)\le\cdots\le P_n(q)$ and $P_1(q)+\cdots+P_n(q)=q$.
 \item[(S2)] If $H$ is a non-empty open subinterval of $I$
   on which $\uP$ is differentiable, then there is an integer
   $r$ with $1\le r\le n$ such that $P_r$ has slope $1$ on
   $H$ while the other components $P_j$ of $\uP$ with $j\neq r$
   are constant on $H$.
 \item[(S3)] If $q$ is an interior point of $I$ at which $\uP$
   is not differentiable and if the integers $r$ and $s$ for which
   $P'_r(q^-)=P'_s(q^+)=1$ satisfy $r<s$, then we have
   $P_{r}(q)=P_{r+1}(q)=\cdots=P_{s}(q)$.
\end{itemize}
\end{definition}

Here, the condition that $\uP\colon I\to \bR^n$ is piecewise linear
means that the set $D$ of points of $I$ at which $\uP$ is not
differentiable (including the boundary points of $I$ that lie in $I$)
is a discrete subset of $I$, and that the derivative of $\uP$ is
locally constant on $I\setminus D$.  Such a map admits a left
derivative $\uP'(q^-)$ at each $q\in I$ with $q\neq \inf I$, and a right
derivative $\uP'(q^+)$ at each $q\in I$ with $q\neq \sup I$.  The slope
of a component $P_r$ of $\uP$ on an open subinterval $H$ of $I\setminus D$
means the constant value of its derivative on $H$ or equivalently
the slope of its graph over $H$.

Figure \ref{gen:fig:S3} shows the
combined graph of the functions $P_r,\dots,P_s$ over a neighborhood
of $q$ under the hypotheses of Condition (S3).  In that case,
the functions $P_{r+1},\dots,P_{s}$ coincide to the left of
$q$, while $P_{r},\dots,P_{s-1}$ coincide to its right.

\begin{figure}[h]
     \begin{tikzpicture}[xscale=0.6,yscale=0.3]
       \node[draw,circle,inner sep=1pt,fill] at (6,4) {};
         \draw[dashed] (6,4)--(6,1) node[below]{$q$};
       \draw[thick] (3,1) -- (9,7);
       \draw[thick] (3,4) -- (9,4);
         \node[left] at (3,1) {$P_{r}$};
         \node[left] at (3,4) {$P_{r+1}=\cdots=P_{s}$};
         \node[right] at (9,4) {$P_{r}=\cdots=P_{s-1}$};
         \node[right] at (9,7) {$P_{s}$};
     \end{tikzpicture}
\caption{Illustration for Condition (S3).}
\label{gen:fig:S3}
\end{figure}
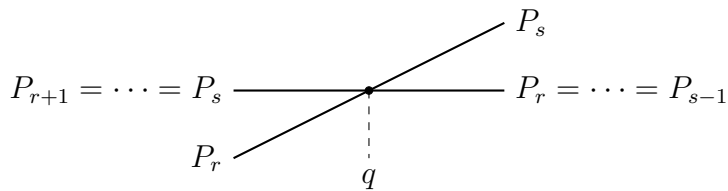

For an interval $I$ of the form $[q_0,\infty)$ with $q_0\ge 0$,
the above notion of an $n$-system on $I$ is the same as that of an
$(n,0)$-system on $I$ according to Definition 2.8 of \cite{R2015}.
These $n$-systems have the following approximation property.

\begin{theorem}
 \label{gen:thm:nsys}
For each non-zero point $\uu\in\bR^n$, there exists $q_0\ge 0$
and an $n$-system $\uP$ on $[q_0,\infty)$ such that
$\uL_\uu-\uP$ is bounded on $[q_0,\infty)$.  Conversely, for
each $n$-system $\uP$ on an interval $[q_0,\infty)$ with $q_0\ge 0$,
there exists a non-zero point $\uu\in\bR^n$ such that
$\uL_\uu-\uP$ is bounded on $[q_0,\infty)$.
\end{theorem}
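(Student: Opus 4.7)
The plan is to deduce both directions of Theorem \ref{gen:thm:nsys} from the correspondence already established in \cite{R2015} between non-zero points of $\bR^n$ and $(n,0)$-systems. The remark in the paragraph preceding the theorem asserts that an $n$-system on $[q_0,\infty)$ in the sense of Definition \ref{gen:def1} coincides with an $(n,0)$-system in the sense of Definition 2.8 of \cite{R2015}, so the two halves of Theorem \ref{gen:thm:nsys} become, up to vocabulary, the two halves of the main theorem of that paper. The work reduces to verifying that (S1)--(S3) really do translate into the axioms used there, and then invoking the cited results.

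For the forward direction, I would first recall that Minkowski's second theorem applied to $\cC_\uu(Q)$, whose volume satisfies $\vol(\cC_\uu(Q))\asymp Q^{-1}$, gives
\[
 L_{\uu,1}(q)+\cdots+L_{\uu,n}(q) = q + O(1),
\]
while the standard properties of successive minima ensure that the $L_{\uu,j}(q)$ are non-negative for $q$ large and ordered by index. Between consecutive transition points, only one of the successive minima is actively shrinking, which after the translation $Q=e^q$ corresponds to the ``one component of slope $1$, the others constant'' behavior recorded in (S2); the combinatorics at a transition point yields (S3). These observations, made precise by Schmidt and Summerer in \cite{SS2013a} and refined in \cite{R2015}, allow one to replace the continuous map $\uL_\uu$ by an exact piecewise linear $n$-system $\uP$ at bounded distance. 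I would simply cite the relevant statement from \cite{R2015} to extract such a $\uP$.

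For the converse direction, which is the deeper of the two, the goal is to realize any prescribed $n$-system as the successive-minima profile of a concrete point $\uu$, up to a bounded error. The construction from \cite{R2015} is inductive: one builds an infinite chain of primitive integer vectors in $\bZ^n$, together with the flag of rational subspaces they span, whose norms, projective distances, and heights are dictated by the transition data of $\uP$. Passing to a suitable limit produces a point $\uu\in\bR^n$ for which the computed successive minima of $\cC_\uu(e^q)$ stay within bounded distance of $\uP(q)$ on $[q_0,\infty)$. Again, I would invoke the theorem of \cite{R2015} rather than redo the construction.

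The main obstacle, which is fully handled in \cite{R2015}, lies in the converse direction and more precisely in realizing property (S3) at transition points, where the several consecutive components $P_r,\dots,P_s$ coincide and swap in a coordinated manner rather than in a simple pairwise exchange. Arranging for such simultaneous collisions to actually occur among the successive minima of the family of convex bodies $\cC_\uu(e^q)$ is what forces the delicate combinatorial construction in that paper. Once this converse construction is imported, no new difficulty arises in the present context, and the theorem follows from the identification of the two notions of system.
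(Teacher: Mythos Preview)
Your proposal is correct and follows essentially the same approach as the paper: both directions are obtained by citing \cite{R2015}, using the identification of $n$-systems on $[q_0,\infty)$ with the $(n,0)$-systems of that paper. The paper's own justification is even terser than yours---it invokes \cite[Theorem~1.3]{R2015} for the forward direction (which in fact yields a \emph{rigid} $n$-system, hence a fortiori an $n$-system) and \cite[Theorem~8.1]{R2015} for the converse, and adds only the remark that the distinction between unit vectors and arbitrary non-zero points is immaterial for a bounded-difference statement.
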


The first assertion follows from \cite[Theorem 1.3]{R2015}
which states the same approximation property for a more
restricted class of maps called \emph{rigid $n$-systems},
while its converse follows from \cite[Theorem 8.1]{R2015}.
The fact that we deal with non-zero points $\uu$ of $\bR^n$
instead of unit vectors like in \cite{R2015} does not matter
for such a qualitative statement.

The goal of this section is to show that this approximation
property extends to the larger class of generalized
$n$-systems which we will define below.  In view of Proposition
\ref{link:prop:omega}, this reduces the determination of
the spectrum of any subsequence of
$(\omega_0,\dots,\omega_{n-1},\homega_0,\dots,\homega_{n-1})$
over $\bR^{n+1}\setminus\{0\}$ to a problem about generalized
$(n+1)$-systems.

The next two lemmas provide examples of $n$-systems.

\begin{lemma}
 \label{gen:lemma1}
Let $a,b\in\bR$ with $0\le a<b$.  There exists an $n$-system
$\uP\colon[a,b]\to\bR^n$ such that $\uP(a)=(a/n,\dots,a/n)$ and
$\uP(b)=(b/n,\dots,b/n)$.
\end{lemma}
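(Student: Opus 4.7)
The plan is to construct $\uP$ explicitly as a continuous piecewise linear path made of $n$ line segments of equal horizontal length $\delta:=(b-a)/n$, each one corresponding to a phase in which exactly one coordinate grows at unit rate. Since every coordinate must be raised from $a/n$ to $b/n$, I would process the coordinates one at a time, starting from the top index $n$ and descending to index $1$; this ordering is chosen precisely so that the inequalities $P_1\le\cdots\le P_n$ are preserved automatically throughout the construction.

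Explicitly, I would set $q_k:=a+k\delta$ for $0\le k\le n$ and, on each interval $[q_{k-1},q_k]$ with $1\le k\le n$, define
\[
 P_j(q)=
 \begin{cases}
  b/n & \text{if } j\ge n-k+2,\\
  a/n+(q-q_{k-1}) & \text{if } j=n-k+1,\\
  a/n & \text{if } j\le n-k.
 \end{cases}
\]
The formulas for consecutive intervals match at each $q_k$ by direct inspection (the coordinate $P_{n-k+1}$ having just reached $b/n$ at the right end of its growth phase), so this produces a well-defined continuous piecewise linear map $\uP\colon[a,b]\to\bR^n$ with $\uP(a)=(a/n,\dots,a/n)$ and $\uP(b)=(b/n,\dots,b/n)$.

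It then remains to verify the three axioms of an $n$-system. For (S1), at any $q$ the non-growing coordinates are either $a/n$ (those below the active index) or $b/n$ (those above), and the growing coordinate lies in $[a/n,b/n]$, so the full ordering holds; the sum has constant slope $1$ on each piece and equals $a$ at $q=a$, hence equals $q$ throughout $[a,b]$. Condition (S2) holds by construction with $r=n-k+1$ on each open subinterval $(q_{k-1},q_k)$. The only axiom that could cause trouble is (S3), but at each interior breakpoint $q_k$ ($1\le k\le n-1$) the slope-one index passes from $r=n-k+1$ on the left to $s=n-k$ on the right; since $s<r$, the hypothesis $r<s$ of (S3) is vacuous and there is nothing to check. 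In short, the only step that might have been delicate was (S3), and it is sidestepped entirely by running the $n$ growth phases in decreasing order of index so that the active coordinate only ever moves \emph{downward} across a breakpoint.
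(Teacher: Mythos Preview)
Your construction is correct and is in fact identical to the paper's: both partition $[a,b]$ into $n$ equal subintervals and, on the $k$-th subinterval, let $P_{n-k+1}$ rise with slope $1$ from $a/n$ to $b/n$ while all other coordinates stay constant. Your explicit verification of (S1)--(S3), in particular the observation that the slope-one index decreases across each breakpoint so that the hypothesis of (S3) is vacuous, spells out what the paper leaves implicit.
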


\begin{proof}
Set $q_i=(n-i)a/n+ib/n$ for $i=0,\dots,n$ and, for each $j=1,\dots,n$,
define $P_j$ to be the unique continuous piecewise linear
function on $[q_0,q_n]=[a,b]$ which is constant equal to
$a/n$ on $[q_0,q_{n-j}]$, has slope $1$ on $[q_{n-j},q_{n-j+1}]$,
and is constant equal to $b/n$ on $[q_{n-j+1},q_n]$.  Then the map
$\uP=(P_1,\dots,P_n)\colon[a,b]\to\bR^n$ is an $n$-system
with the required properties. Its combined graph is illustrated on
Figure \ref{gen:lemma1:fig} below.
\end{proof}

\begin{figure}[h]
     \begin{tikzpicture}[xscale=0.4,yscale=0.7]
       \node[draw,circle,inner sep=1pt,fill] at (3,2) {};
         \draw[dashed] (3,2)--(3,1) node[below]{$q_{0}=a$};
       \node[draw,circle,inner sep=1pt,fill] at (8,2) {};
       \node[draw,circle,inner sep=1pt,fill] at (8,7) {};
         \draw[dashed] (8,7)--(8,1) node[below]{$q_{1}$};
       \node[draw,circle,inner sep=1pt,fill] at (13,2) {};
       \node[draw,circle,inner sep=1pt,fill] at (13,7) {};
         \draw[dashed] (13,7)--(13,1) node[below]{$q_{2}$};
       \node[draw,circle,inner sep=1pt,fill] at (23,2) {};
       \node[draw,circle,inner sep=1pt,fill] at (23,7) {};
         \draw[dashed] (23,7)--(23,1) node[below]{$q_{n-1}$};
       \node[draw,circle,inner sep=1pt,fill] at (28,7) {};
         \draw[dashed] (28,7)--(28,1) node[below]{$q_{n}=b$};
       \draw[thick] (3,2) -- (14,2);
        \draw[dotted, thick] (14,2) -- (15,2);
        \draw[dotted, thick] (21,2) -- (22,2);
        \draw[thick] (22,2) -- (23,2);
        \draw[dashed] (3,2)--(1,2) node[left]{$\disp \frac{a}{n}$};
       \draw[thick] (8,7) -- (14,7);
        \draw[dotted, thick] (14,7) -- (15,7);
        \draw[dotted, thick] (21,7) -- (22,7);
        \draw[thick] (22,7) -- (28,7);
        \draw[dashed] (8,7)--(1,7) node[left]{$\disp \frac{b}{n}$};
       \draw[thick] (3,2) -- (8,7);
       \draw[thick] (8,2) -- (13,7);
       \draw[thick] (13,2) -- (14,3);
        \draw[dotted, thick] (14,3) -- (15,4);
       \draw[thick] (22,6) -- (23,7);
        \draw[dotted, thick] (21,5) -- (22,6);
       \draw[thick] (23,2) -- (28,7);
       \node[below right] at (25.3,4.5) {$P_1$};
       \node[below right] at (20.3,4.5) {$P_2$};
       \node[below right] at (15.3,4.5) {$P_{n-2}$};
       \node[below right] at (10.3,4.5) {$P_{n-1}$};
       \node[below right] at (5.3,4.5) {$P_n$};
     \end{tikzpicture}
\caption{Combined graph of an $n$-system for Lemma \ref{gen:lemma1}.}
\label{gen:lemma1:fig}
\end{figure}
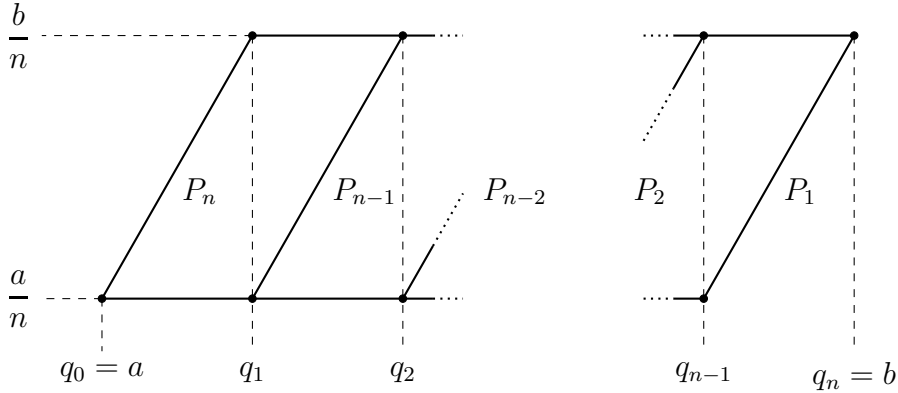

\begin{lemma}
 \label{gen:lemma2}
Let $a,b,c\in\bR$ with $0\le a<b<c$.  Suppose that
$\uP^{(1)}\colon[a,b]\to\bR^n$ and $\uP^{(2)}\colon[b,c]\to\bR^n$
are $n$-systems with $\uP^{(1)}(b)=\uP^{(2)}(b)=(b/n,\dots,b/n)$.
Then there is an $n$-system $\uP$ on $[a,c]$ which
restricts to $\uP^{(1)}$ on $[a,b]$ and to $\uP^{(2)}$ on $[b,c]$.
\end{lemma}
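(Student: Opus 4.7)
My plan is to define $\uP\colon [a,c]\to\bR^n$ by concatenation, setting $\uP(q)=\uP^{(1)}(q)$ for $q\in[a,b]$ and $\uP(q)=\uP^{(2)}(q)$ for $q\in[b,c]$. The map is well-defined and continuous at $b$ because $\uP^{(1)}(b)=\uP^{(2)}(b)=(b/n,\dots,b/n)$, and its set of non-differentiable points in $[a,c]$ is contained in the union of the two discrete sets associated with $\uP^{(1)}$ and $\uP^{(2)}$ (plus the endpoints $a,c$), hence is discrete; thus $\uP$ is piecewise linear. Properties (S1) and (S2) are local conditions on open subintervals of differentiability and so transfer directly from each piece to $\uP$. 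This reduces the task to checking condition (S3) at the single new potentially non-differentiable interior point $q=b$.

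For (S3) at $b$, the key observation is that every component of $\uP(b)$ equals $b/n$, so the required equalities $P_r(b)=P_{r+1}(b)=\cdots=P_s(b)$ for any pair of indices $r<s$ (including the left/right unit-slope indices supplied by (S2)) hold automatically. One may in fact pin down these indices: by (S2) applied to $\uP^{(1)}$ on an interval $(b-\epsilon,b)$ some $P^{(1)}_r$ has slope $1$ with the others constant equal to $b/n$, and the monotonicity $P^{(1)}_1\le\cdots\le P^{(1)}_n$ evaluated at $q=b-\delta$ forces $r=1$ (otherwise $P^{(1)}_{r-1}(b-\delta)=b/n>b/n-\delta=P^{(1)}_r(b-\delta)$); symmetrically, the right-slope index from $\uP^{(2)}$ must be $s=n$. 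Hence for $n\ge 2$ the glued map is genuinely non-differentiable at $b$ with indices $r=1<n=s$, and (S3) is satisfied.

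I do not anticipate any serious obstacle; the lemma is essentially a direct verification, and the only substantive input is that the common junction value equals $(b/n,\dots,b/n)$, which renders condition (S3) vacuous at $b$ regardless of the specific left and right slope patterns inherited from the two pieces.
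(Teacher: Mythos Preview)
Your proof is correct and follows the same approach as the paper: define $\uP$ by concatenation and note that condition (S3) at $q=b$ is automatic because all coordinates of $\uP(b)$ equal $b/n$. Your additional identification of the left and right slope indices as $r=1$ and $s=n$ is correct but not needed, since (S3) holds for any $r<s$ once all coordinates agree at $b$; the paper's proof simply observes this and concludes.
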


\begin{proof}
There is a unique map $\uP\colon [a,c]\to\bR^n$ which restricts to
$\uP^{(1)}$ on $[a,b]$ and to $\uP^{(2)}$ on $[b,c]$.  It is continuous
but not differentiable at $b$.  However, it satisfies the condition
(S3) at $q=b$ since all its coordinates are equal at that point.
Thus, it is an $n$-system.
\end{proof}

Let $I$ be any subinterval of $[0,\infty)$ and let $D$ be an arbitrary
discrete subset of $I$.  By combining Lemmas \ref{gen:lemma1} and
\ref{gen:lemma2}, we can construct an $n$-system $\uP$ on $I$
which takes the value $\uP(q)=(q/n,\dots,q/n)$ at each point $q$ of $D$.
Then, by choosing $D$ appropriately, we may ensure that $\sup_{q\in I}
\|\uP(q)-(q/n,\dots,q/n)\|_\infty$ is finite and arbitrarily small.
As we will see, a slightly more general argument shows that these
properties of approximation by $n$-systems extend to the following
maps, besides the map $\tP\colon I\to\bR^n$ sending each $q\in I$ to
$(q/n,\dots,q/n)$.

\begin{definition}
 \label{gen:def2}
Let $I$ be a subinterval of $[0,\infty)$ with non-empty
interior.  A generalized $n$-system on $I$ is a
continuous piecewise linear map
$\uP=(P_1,\dots,P_n)\colon I\to\bR^n$ with the following
properties.
\begin{itemize}
 \item[(G1)] For each $q\in I$, we have
   $0\le P_1(q)\le\cdots\le P_n(q)$ and $P_1(q)+\cdots+P_n(q)=q$.
 \item[(G2)] If $H$ is a non-empty open subinterval of $I$
   on which $\uP$ is differentiable, then there are integers
   $\rbot,\rtop$ with $1\le\rbot\le\rtop\le n$ such that
   $P_{\rbot},P_{\rbot+1},\dots,P_{\rtop}$ coincide on the whole
   interval $H$ and have slope $1/(\rtop-\rbot+1)$, while any other
   component $P_j$ of $\uP$ is constant on $H$.
 \item[(G3)] If $q$ is an interior point of $I$ at which $\uP$
   is not differentiable, if $\rbot,\rtop,\sbot,\stop$ are
   the integers for which
   \begin{equation}
    \label{gen:def2:eq}
     P'_j(q^-)=\frac{1}{\rtop-\rbot+1} \quad (\rbot\le j\le\rtop)
     \et
     P'_j(q^+)=\frac{1}{\stop-\sbot+1} \quad (\sbot\le j\le\stop),
   \end{equation}
   and if $\rbot<\stop$, then we have
   $P_{\rbot}(q)=P_{\rbot+1}(q)=\cdots=P_{\stop}(q)$.
\end{itemize}
\end{definition}

Figure \ref{gen:fig:G3}
shows the combined graph of the functions
$P_{\rbot},\dots,P_{\stop}$ over a neighborhood
of $q$ under the hypotheses of (G3) when $\rtop<\sbot$ and
$\rtop<\stop$.  The pattern is slightly different when
$\rbot\ge \sbot$ (resp.\ when $\rtop\ge \stop$): in that case,
there is no horizontal line segment to the right of $q$
(resp.\ to the left of $q$).  However, one cannot have
$\sbot\le \rbot<\stop\le \rtop$ and so at least one of the
inequalities $\rbot<\sbot$ or $\rtop<\stop$ must hold.

\begin{figure}[h]
     \begin{tikzpicture}[xscale=0.6,yscale=0.3]
       \node[draw,circle,inner sep=1pt,fill] at (6,4) {};
         \draw[dashed] (6,4)--(6,1) node[below]{$q$};
       \draw[thick] (3,0.7) -- (6,4) -- (9,6);
       \draw[thick] (3,4) -- (9,4);
         \node[left] at (3,0.7) {$P_{\rbot}=\cdots=P_{\rtop}$};
         \node[left] at (3,4) {$P_{\rtop+1}=\cdots=P_{\stop}$};
         \node[right] at (9,4) {$P_{\rbot}=\cdots=P_{\sbot-1}$};
         \node[right] at (9,6) {$P_{\sbot}=\cdots=P_{\stop}$};
     \end{tikzpicture}
\caption{Illustration for Condition (G3) when $\rtop<\sbot$
and $\rtop<\stop$.}
\label{gen:fig:G3}
\end{figure}

Note also that if \eqref{gen:def2:eq} holds at some point $q$
where $\uP$ is not differentiable, and if $\rbot\ge \stop$, then
we must have $\rbot>\stop$ and $P_{\rbot}(q)=\cdots=P_{\rtop}(q)
> P_{\sbot}(q)=\cdots=P_{\stop}(q)$.  We leave the verification
to the reader.

Clearly $n$-systems are also generalized $n$-systems.
The next result and its corollary formalize the claims
made just above Definition \ref{gen:def2}.

\begin{proposition}
 \label{gen:prop:approx}
Let $\tuP$ be a generalized $n$-system on a subinterval $I$
of $[0,\infty)$ with non-empty interior, and let $D$ be a
discrete subset of $I$.  Then there is an $n$-system $\uP$
on $I$ such that $\uP(t)=\tuP(t)$ for each $t\in D$.
\end{proposition}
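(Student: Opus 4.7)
The plan is to enlarge $D$ so as to include all non-differentiable points of $\tuP$ and then, on each maximal linear piece of $\tuP$ lying between consecutive points of the enlarged set, to replace $\tuP$ by a zig-zag of unit-slope growths of individual components. More precisely, let $N\subseteq I$ be the (discrete) set of points at which $\tuP$ fails to be differentiable, including the endpoints of $I$ that lie in $I$, and set $\tilde D=D\cup N$. If a connected component of $I\setminus\tilde D$ is unbounded (which can happen only at an unbounded end of $I$), I further enlarge $\tilde D$ by a sequence of points tending to that end; the resulting set is still discrete. Consecutive points of $\tilde D$ then cut $I$ into maximal closed sub-intervals $[\alpha,\beta]$ on each of which $\tuP$ has a single moving block $[\rbot,\rtop]$ of size $k=\rtop-\rbot+1$: the components $\tuP_{\rbot},\dots,\tuP_{\rtop}$ grow with common slope $1/k$, while all other components are constant.

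On each such $[\alpha,\beta]$, I define $\uP$ by subdividing it into $k$ sub-subintervals of equal length $(\beta-\alpha)/k$, and on the $j$-th one (for $j=1,\dots,k$) letting only $\uP_{\rtop-j+1}$ grow with slope $1$ while holding all other components constant. Starting from $\uP(\alpha)=\tuP(\alpha)$, each component $\uP_j$ with $\rbot\le j\le\rtop$ then increases by exactly $(\beta-\alpha)/k$ over the full interval, matching the slope-$1/k$ growth of $\tuP_j$, so that $\uP(\beta)=\tuP(\beta)$. Within $[\alpha,\beta]$, conditions (S1) and (S2) are immediate; the ordering $\uP_1\le\cdots\le\uP_n$ is preserved because during the $j$-th sub-subinterval the growing component $\uP_{\rtop-j+1}$ merely sweeps from the common value of the lower moving components up to the value already attained by $\uP_{\rtop-j+2},\dots,\uP_{\rtop}$ (bounded above by the neighbouring constant component thanks to (G1) for $\tuP$); and (S3) at each internal breakpoint of the zig-zag is vacuous, since the left-slope-$1$ index $\rtop-j+1$ strictly exceeds the right-slope-$1$ index $\rtop-j$.

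The remaining task is to verify (S3) at each interior point $q$ of $\tilde D$, where two consecutive zig-zag cycles meet. By construction, the last sub-subinterval of the left cycle grows $\uP_{\rbot}$ and the first sub-subinterval of the right cycle grows $\uP_{\stop}$, where $[\sbot,\stop]$ denotes the right moving block; hence (S3) requires $\uP_{\rbot}(q)=\cdots=\uP_{\stop}(q)$ precisely when $\rbot<\stop$. If $q\in N$, this coincidence is supplied by (G3) for $\tuP$, which yields $\tuP_{\rbot}(q)=\cdots=\tuP_{\stop}(q)$, combined with $\uP(q)=\tuP(q)$. If $q\notin N$, then $q$ lies in a linear segment of $\tuP$, so $[\rbot,\rtop]=[\sbot,\stop]$ and the same coincidence follows from the common value of $\tuP_{\rbot},\dots,\tuP_{\rtop}$ on that segment.

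The principal obstacle is this bookkeeping at gluing points where the moving block of $\tuP$ changes; the argument works because, as noted just after Definition \ref{gen:def2}, the configuration $\sbot\le\rbot<\stop\le\rtop$ is forbidden, which is exactly what guarantees that (S3) for $\uP$ demands only those coincidences that (G3) for $\tuP$ actually supplies.
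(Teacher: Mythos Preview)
Your proof is correct and follows essentially the same approach as the paper. The paper packages your zig-zag construction (growing $\uP_{\rtop},\uP_{\rtop-1},\dots,\uP_{\rbot}$ in turn over equal sub-subintervals) into its Lemma~\ref{gen:lemma1}, then invokes that lemma inside the block $[\rbot,\rtop]$ via a shift, whereas you write it out directly; the gluing verification at points of $\tilde D$ is identical. Your final paragraph about the forbidden configuration $\sbot\le\rbot<\stop\le\rtop$ is unnecessary here: the hypothesis of (S3) at a gluing point is exactly $\rbot<\stop$, which is already the hypothesis of (G3), so the match is direct without that extra observation.
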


\begin{proof}
Let $D_0$ be the set of points of $I$ where $\tuP$ is not
differentiable, including the boundary points of $I$ that
lie in $I$ (if any). Since $D_0$ is a discrete subset of
$I$, we may assume without loss of generality that $D$
contains $D_0$.  We may also assume that $I$ and $D$ have
the same infimum and the same supremum.

Let $(t_1,t_2)$ be a maximal subinterval of $I\setminus D$.
Then, $t_1$ and $t_2$ belong to $D$ and, as $D\subseteq I$,
it follows that $[t_1,t_2]$ is a subinterval of $I$.
As $\tuP$ is differentiable on $(t_1,t_2)$, there are
integers $\rbot$ and $\rtop$ with $1\le \rbot\le \rtop\le n$
such that $\tP_\rbot,\dots,\tP_\rtop$ coincide and have
slope $1/(\rtop-\rbot+1)$ on $(t_1,t_2)$ while all other
components $\tP_j$ are equal to a constant $c_j$ on that
interval.  Put
\[
 m=\rtop-\rbot+1
 \et
 c=(c_1+\cdots+c_{\rbot-1})+(c_{\rtop+1}+\cdots+c_n).
\]
By Lemma \ref{gen:lemma1}, there exists an $m$-system
$(A_1,\dots,A_m)$ on $[t_1,t_2]$ with
\[
 (A_1(t_i),\dots,A_m(t_i))
 = \Big( \frac{t_i}{m},\dots,\frac{t_i}{m} \Big)
 \quad \text{for $i=1,2$.}
\]
Since $\tP_j(q)=(q-c)/m$ for each $j=\rbot,\dots,\rtop$
and each $q\in[t_1,t_2]$, the map
$\uP\colon[t_1,t_2]\to\bR^n$ given by
\begin{equation}
 \label{gen:prop:approx:eq}
 \uP(q)
  = \Big( c_1, \dots, c_{\rbot-1},
          A_1(q)-\frac{c}{m},\dots,A_m(q)-\frac{c}{m},
          c_{\rtop+1}, \dots, c_n \Big)
 \quad (t_1\le q\le t_2),
\end{equation}
coincides with $\tuP$ at the points $q=t_1,t_2$.  As the
intervals $[t_1,t_2]$ cover $I$ and have no interior point
in common, this in fact defines a continuous piecewise
linear map $\uP\colon I\to\bR^n$ which coincides with
$\tuP$ on the set $D$.

To conclude, it remains simply to show that $\uP$ satisfies
the conditions (S1-S3) of Definition \ref{gen:def1}.
For each $q$ in a fixed interval $[t_1,t_2]$, the
formula \eqref{gen:prop:approx:eq} shows that the
coordinates of $\uP(q)$ sum up to $q$.  They also form
a monotone increasing sequence because
$A_1(q)\le\cdots\le A_m(q)$ are separately monotone
increasing functions of $q$ in $[t_1,t_2]$, and because
the coordinates of $\uP(t_i)=\tuP(t_i)$ form monotone
increasing sequences for $i=1,2$. Thus the condition (S1)
is fulfilled.  The condition (S2) also holds by construction.
The same is true for (S3) except possibly at the points $q$
of $D$ which lie in the interior of $I$.  The latter points
are the common boundary points $q=t_2$ of two maximal
subintervals $(t_1,t_2)$ and $(t_2,t_3)$ of $I\setminus D$.
Define $\rbot$, $\rtop$ as above and denote by $\sbot$,
$\stop$ the corresponding integers for the interval
$[t_2,t_3]$.  By construction, the function $\uP$ satisfies
$P'_\rbot(t_2^-)=P'_\stop(t_2^+)=1$. Suppose that $\rbot<\stop$.
If $t_2\in D_0$, we have $\tP_\rbot(t_2)=\cdots=\tP_\stop(t_2)$
because $\tuP$ satisfies the condition (G3) of
Definition \ref{gen:def2}.  If $t_2\notin D_0$, these
equalities still hold because then
$\rbot=\sbot$ and $\rtop=\stop$.  As $\uP(t_2)=\tuP(t_2)$,
we conclude that $P_\rbot(t_2)=\cdots=P_\stop(t_2)$
and thus (S3) is satisfied.
\end{proof}

\begin{corollary}
 \label{gen:cor}
Let $\tuP\colon I\to\bR^n$ be a generalized $n$-system
and let $\epsilon>0$.
Then there is an $n$-system $\uP$ on $I$ such that
$\|\tuP(q)-\uP(q)\|_\infty \le \epsilon$ for each $q\in I$.
\end{corollary}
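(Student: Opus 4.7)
The plan is to deduce the corollary directly from Proposition \ref{gen:prop:approx} by choosing the interpolation set $D$ fine enough so that the constraint $\uP = \tuP$ on $D$ automatically forces uniform $\epsilon$-closeness between the interpolation nodes.

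First I would let $D_0 \subseteq I$ denote the discrete set of non-differentiability points of $\tuP$ (together with the boundary points of $I$ lying in $I$), and extend it to a discrete subset $D \subseteq I$ with the additional property that every maximal open subinterval $(t_1,t_2)$ of $I\setminus D$ has length at most $\epsilon$; for example, one adjoins to $D_0$ an arithmetic progression of spacing $\le \epsilon$. Then Proposition \ref{gen:prop:approx} produces an $n$-system $\uP$ on $I$ with $\uP(t)=\tuP(t)$ for every $t\in D$.

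The key step is the pointwise error bound on a maximal subinterval $(t_1,t_2)$ of $I\setminus D$. Since $D\supseteq D_0$, $\tuP$ is differentiable throughout this interval, so by (G2) every component $\tP_j$ is affine on $[t_1,t_2]$ with slope either $0$ or $1/(\rtop-\rbot+1)\in(0,1]$, and in particular non-decreasing with slope $\le 1$. The components $P_j$ of the $n$-system $\uP$ are likewise non-decreasing on $[t_1,t_2]$, by (S1)--(S2). Because $\uP$ and $\tuP$ agree at both endpoints $t_1,t_2$, the standard monotonicity argument yields, for any $q\in[t_1,t_2]$ and any index $j$,
\[
 |\uP_j(q)-\tP_j(q)| \le \tP_j(t_2)-\tP_j(t_1) \le t_2-t_1 \le \epsilon.
\]
Taking the supremum over $j$ and $q$ gives $\|\uP(q)-\tuP(q)\|_\infty\le\epsilon$ on $I$, as required.

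The only real thing to be careful about is that $D$ can indeed be chosen both discrete and with all gaps bounded by $\epsilon$ (a concern only when $I$ is unbounded, handled by adjoining an equally spaced sequence), and that the components of both maps are monotone on each gap so that interpolation at the endpoints controls the error in between. Both points are routine, so I expect no substantive obstacle beyond invoking Proposition \ref{gen:prop:approx} correctly.
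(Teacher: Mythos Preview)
Your proposal is correct and follows essentially the same route as the paper: invoke Proposition~\ref{gen:prop:approx} on a sufficiently fine discrete set $D$, then use that all components of both maps have slopes in $[0,1]$ to bound the error between interpolation nodes. The only cosmetic difference is in the error estimate: the paper takes $D$ to be the multiples of $\epsilon/2$ in $I$ and uses the triangle inequality $\|\tuP(q)-\uP(q)\|_\infty \le \|\tuP(q)-\tuP(t)\|_\infty + \|\uP(q)-\uP(t)\|_\infty \le 2|q-t|$ for the nearest $t\in D$, whereas you include $D_0$ in $D$, take gaps of length $\le \epsilon$, and use the monotonicity sandwich at both endpoints to avoid the factor~$2$.
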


\begin{proof}
Let $D$ be the set of all integer multiples of $\epsilon/2$
in $I$.  By Proposition \ref{gen:prop:approx}, there is an
$n$-system $\uP$ on $I$ such that $\uP(t)=\tuP(t)$ for
each $t\in D$.  Since the components of $\uP$ and of $\tuP$
have slopes between $0$ and $1$, we find that
\[
 \|\tuP(q)-\uP(q)\|_\infty
  \le \|\tuP(q)-\tuP(t)\|_\infty + \|\uP(q)-\uP(t)\|_\infty
  \le 2|q-t|
\]
for each $q\in I$ and each $t\in D$.  Upon choosing $t$ so that
$|q-t|\le \epsilon/2$, we conclude that $\uP$ has the required
property.
\end{proof}

Corollary \ref{gen:cor} shows that
the set $\cS_I$ of $n$-systems on a subinterval $I$ of $[0,\infty)$
is dense in the set $\cG_I$ of generalized $n$-systems on $I$, 
for the topology of uniform convergence.
More precisely, it can be shown that $\cG_I$ is the
completion of $\cS_I$ for that topology.
This justifies working with functions from this set.
In particular, the validity of Theorem \ref{gen:thm:nsys} extends to
generalized $n$-systems.

%
%

\section{A family of generalized $n$-systems}
 \label{sec:fam}

Again, we fix an integer $n\ge 2$.  Consider the set
\[
 \Delta^{(n)}
 = \{ (a_1,\dots,a_n)\in\bR^n \,;\, 0<a_1<\cdots<a_n
      \text{ and } a_1+\cdots+a_n=1 \}
\]
and its topological closure
\[
 \bar{\Delta}^{(n)}
 = \{ (a_1,\dots,a_n)\in\bR^n \,;\, 0\le a_1\le\cdots\le a_n
      \text{ and } a_1+\cdots+a_n=1 \}.
\]
By condition (G1) from Definition \ref{gen:def2}, we have
$q^{-1}\uP(q)\in \bar{\Delta}^{(n)}$ for any
generalized $n$-system $\uP$ and any $q>0$ in its
interval of definition.
For each $j=1,\dots,n$, we define a map
$\psi_j\colon \bar{\Delta}^{(n)} \to \bR$ by
\[
 \psi_j(a_1,\dots,a_n)=a_1+\cdots+a_j.
\]
The proof of our main result relies on the following basic
construction.

\begin{proposition}
 \label{fam:prop:basic}
Let $\ua=(a_1,\dots,a_n)\in\Delta^{(n)}$.  Define
\begin{align*}
 q_i&=a_1+\cdots+a_i+(n-i)a_i \quad (1\le i\le n),\\
 q_{n-1+i}&=(i-1)a_i+a_i+\cdots+a_n \quad (1\le i\le n).
\end{align*}
Then there exists a unique generalized $n$-system
$\uP=(P_1,\dots,P_n)$ on
$[q_1,q_{2n-1}]=[na_1,na_n]$ whose combined graph is as shown
on Figure \ref{gen:ex1:fig} below. For each $j=1,\dots,n-1$,
it has
\[
 \inf \psi_j\big(q^{-1}\uP(q)\big)
 = \psi_j(\ua)
 \et
 \sup \psi_j\big(q^{-1}\uP(q)\big)
 = \frac{j}{n}
\]
where both infimum and supremum are taken over all
$q\in[na_1,na_n]$.
\end{proposition}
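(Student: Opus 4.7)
The plan is to construct $\uP$ explicitly by prescribing its value at each pinch point and interpolating linearly in between. At the going-up pinch points I set
\[
 \uP(q_i)=(a_1,\ldots,a_{i-1},\underbrace{a_i,\ldots,a_i}_{n-i+1\text{ entries}})
 \qquad(1\le i\le n),
\]
and at the going-down pinch points
\[
 \uP(q_{n-1+i})=(\underbrace{a_i,\ldots,a_i}_{i\text{ entries}},a_{i+1},\ldots,a_n)
 \qquad(1\le i\le n),
\]
which are consistent at the junction, both giving $\uP(q_n)=(a_1,\ldots,a_n)$. On each first-half subinterval $(q_i,q_{i+1})$ the block $P_{i+1},\ldots,P_n$ rises jointly from $a_i$ to $a_{i+1}$ with common slope $1/(n-i)$ while the other components are constant; symmetrically, on $(q_{n-1+i},q_{n+i})$ the block $P_1,\ldots,P_i$ rises jointly from $a_i$ to $a_{i+1}$ with slope $1/i$. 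Uniqueness is forced by this prescription.

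I would then verify the three conditions of Definition~\ref{gen:def2}. (G1) follows from $a_1<\cdots<a_n$ together with $\sum_k P_k(q_i)=q_i$ by direct computation, the sum extending to all $q$ by being affine of slope $1$ on each subinterval. (G2) is built into the construction. For (G3) at an interior first-half pinch point $q_i$ with $i<n$, the left and right blocks are $\{P_i,\ldots,P_n\}$ and $\{P_{i+1},\ldots,P_n\}$, so $\rbot=i<\stop=n$ and the required coincidence $P_i(q_i)=\cdots=P_n(q_i)=a_i$ is immediate. The check in the second half is symmetric. At the central point $q_n$, $\rbot=n>\stop=1$, so (G3) does not fire; the supplementary remark after the definition is satisfied since $a_n>a_1$.

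For the supremum, since $P_1(q)\le\cdots\le P_n(q)$ sum to $q$ one has $(P_1+\cdots+P_j)/q\le j/n$, with equality at $q=na_1$ and $q=na_n$ where all coordinates of $\uP$ coincide; hence the supremum equals $j/n$. The infimum is the main technical step. Set
\[
 \phi(q)=P_1(q)+\cdots+P_j(q)-q\,\psi_j(\ua),
\]
so that $\psi_j(q^{-1}\uP(q))\ge\psi_j(\ua)$ is equivalent to $\phi(q)\ge0$. A slope computation shows that $\phi$ is concave on each of the two halves of $[q_1,q_{2n-1}]$: on $[q_1,q_n]$ the successive slopes of $P_1+\cdots+P_j$ are $(j-i)/(n-i)$ for $i<j$ and then zero for $i\ge j$, which is non-increasing in $i$ since the discrete difference equals $(j-n)/((n-i-1)(n-i))<0$; on $[q_n,q_{2n-1}]$ the slopes are $1$ for $i\le j$ followed by $j/i$ for $i>j$, again non-increasing. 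Since $\phi(q_n)=0$ and both $\phi(q_1)=a_1(j-n\psi_j(\ua))$ and $\phi(q_{2n-1})=a_n(j-n\psi_j(\ua))$ are non-negative by the supremum bound, concavity on each half forces $\phi\ge0$ throughout, with equality only at $q=1$. Thus $\inf\psi_j(q^{-1}\uP(q))=\psi_j(\ua)$, attained at $q=1$. The main obstacle is recognizing the correct slope pattern on each half so that the concavity argument can be brought to bear; once that is in place, the endpoint evaluation closes the proof.
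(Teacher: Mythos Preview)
Your argument is correct and runs parallel to the paper's, with the same identification of the slope pattern of $M_j:=P_1+\cdots+P_j$ on each subinterval. The one genuine difference is in how you extract the extrema of $M_j(q)/q$ from this pattern. The paper isolates an auxiliary lemma (Lemma~\ref{fam:lemmaM}) to the effect that if the successive slopes of a piecewise linear $M$ are strictly decreasing and all lie below $M(a)/a$, then $M(q)/q$ is strictly decreasing; applying it on each half shows $M_j(q)/q$ is unimodal with minimum at $q_n$ and maxima at the endpoints. You bypass this lemma: for the supremum you use the one-line simplex inequality $M_j(q)/q\le j/n$ valid for any point of $\bar\Delta^{(n)}$, with equality at $q_1$ and $q_{2n-1}$; for the infimum you observe that the decreasing slope pattern makes $\phi(q)=M_j(q)-q\,\psi_j(\ua)$ concave on each half, and then the endpoint values $\phi(q_1),\phi(q_{2n-1})\ge 0$, $\phi(q_n)=0$ force $\phi\ge 0$. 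Your route is a touch more elementary (no auxiliary monotonicity lemma) but yields slightly less information: the paper actually gets strict monotonicity of $M_j(q)/q$ on each half, whereas you only get the bound. Either suffices for the proposition.

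Two cosmetic points. First, writing ``attained at $q=1$'' is correct because $q_n=a_1+\cdots+a_n=1$, but saying $q=q_n$ would be clearer. Second, the clause ``with equality only at $q=1$'' is not established by your concavity argument as stated (you would need the endpoint values to be strictly positive and to rule out entire linear pieces of $\phi$ sitting on the chord), but uniqueness of the minimizer is not part of the proposition and can safely be dropped.
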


In Figure \ref{gen:ex1:fig},
the number $1/m$ next to each slanted line segment indicates
its slope and thus the number $m$ of components of $\uP$ whose graph
coincide with this line segment over the corresponding interval
$[q_i,q_{i+1}]$.

\begin{figure}[h]
     \begin{tikzpicture}[scale=0.35]
       \node[draw,circle,inner sep=1pt,fill] at (4,3) {};
         \draw[dashed] (4,3)--(4,2) node[below]{$q_{1}$};
       \node[draw,circle,inner sep=1pt,fill] at (9,5) {};
         \draw[dashed] (9,5)--(9,2) node[below]{$q_{2}$};
       \node[draw,circle,inner sep=1pt,fill] at (13,7) {};
         \draw[dashed] (13,7)--(13,2) node[below]{$q_{3}$};
       \node[draw,circle,inner sep=1pt,fill] at (18,11) {};
         \draw[dashed] (18,11)--(18,2);
         \node[below] at (17.5,2) {$q_{n-1}$};
       \node[draw,circle,inner sep=1pt,fill] at (20,15) {};
       \node[draw,circle,inner sep=1pt,fill] at (20,3) {};
         \draw[dashed] (20,15)--(20,2);
         \node[below] at (19.75,2) {$q_{n}$};
       \node[draw,circle,inner sep=1pt,fill] at (21,5) {};
         \draw[dashed] (21,5)--(21,2);
         \node[below] at (21.6,2) {$q_{n+1}$};
       \node[draw,circle,inner sep=1pt,fill] at (23,7) {};
         \draw[dashed] (23,7)--(23,2);
         \node[below] at (23.9,2) {$q_{n+2}$};
       \node[draw,circle,inner sep=1pt,fill] at (30,11) {};
         \draw[dashed] (30,11)--(30,2) node[below]{$q_{2n-2}$};
       \node[draw,circle,inner sep=1pt,fill] at (40,15) {};
         \draw[dashed] (40,15)--(40,2) node[below]{$q_{2n-1}$};
       \draw[dashed] (4,3)--(2,3) node[left]{$a_{1}$};
       \draw[dashed] (9,5)--(2,5) node[left]{$a_{2}$};
       \draw[dashed] (13,7)--(2,7) node[left]{$a_{3}$};
       \node[left] at (1.5,9) {$\vdots$};
       \draw[dashed] (18,11)--(2,11) node[left]{$a_{n-1}$};
       \draw[dashed] (20,15)--(2,15) node[left]{$a_{n}$};
       \draw[thick] (4,3) -- (20,3);
       \draw[thick] (9,5) -- (21,5);
       \draw[thick] (13,7) -- (23,7);
       \draw[thick] (18,11) -- (30,11);
       \draw[thick] (20,15) -- (40,15);
       \draw[thick] (20,3) -- (21,5) -- (23,7) -- (24,23/3);
       \draw[dotted, thick] (24,23/3) -- (25,25/3);
       \draw[dotted, thick] (27,19/2) -- (28,10);
       \draw[thick] (28,10) -- (30,11) -- (40,15);
       \draw[thick] (4,3) -- (9,5) -- (13,7) -- (14,23/3);
        \node[font=\footnotesize] at (4.5,4.2) {$\frac{1}{n-1}$};
        \node[font=\footnotesize] at (9.3,6.2) {$\frac{1}{n-2}$};
        \node[font=\footnotesize] at (13.2,8.3) {$\frac{1}{n-3}$};
        \node[font=\footnotesize] at (15.9,10) {$\frac{1}{2}$};
        \node[font=\footnotesize] at (18.2,13) {$1$};
       \draw[dotted, thick] (14,23/3) -- (15,25/3);
       \draw[dotted, thick] (16.3,9.3) -- (17,10);
       \draw[thick] (17,10) -- (18,11) -- (20,15) -- (40,15);
        \node[font=\footnotesize] at (20.6,3.2) {$1$};
        \node[font=\footnotesize] at (22.3,5.3) {$\frac{1}{2}$};
        \node[font=\footnotesize] at (24.6,7.2) {$\frac{1}{3}$};
        \node[font=\footnotesize] at (28,9) {$\frac{1}{n-2}$};
        \node[font=\footnotesize] at (36,12.5) {$\frac{1}{n-1}$};
       \end{tikzpicture}
\caption{The combined graph of a generalized $n$-system.}
\label{gen:ex1:fig}
\end{figure}
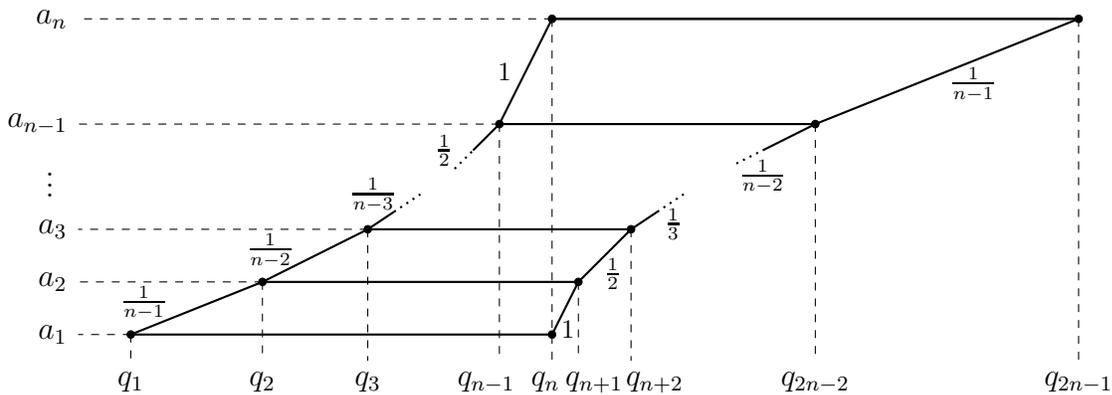

The existence and uniqueness of $\uP$ is easily seen.  In order
to compute the required infimum and supremum, we use the following
result.

\begin{lemma}
 \label{fam:lemmaM}
Let $0<a<b$, let $M\colon [a,b]\to\bR$ be a continuous piecewise
linear function on $[a,b]$, let $q_1=a<q_2<\cdots<q_t=b$ be the
points where $M$ is not differentiable (including $a$ and $b$),
and let $\rho_i$ denote the constant value of the derivative
of $M$ on $(q_i,q_{i+1})$ for $i=1,\dots,t-1$.
\begin{itemize}
 \item[1)] If $M(a)/a>\rho_1>\cdots>\rho_{t-1}$, then $M(q)/q$
  is strictly decreasing on $[a,b]$, and bounded below
  by $\rho_{t-1}$.
 \item[2)] If $\rho_1>\cdots>\rho_{t-1}>M(b)/b$, then $M(q)/q$
   is strictly increasing on $[a,b]$, and bounded above
   by $\rho_1$.
\end{itemize}
\end{lemma}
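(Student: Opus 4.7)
The plan is to analyze $M(q)/q$ piece by piece using the explicit linear form of $M$. On each maximal interval of linearity $[q_i,q_{i+1}]$, write
\[
 M(q) = \alpha_i + \rho_i q
 \et
 \frac{M(q)}{q} = \rho_i + \frac{\alpha_i}{q}.
\]
The latter expression is strictly decreasing in $q$ when $\alpha_i>0$, strictly increasing when $\alpha_i<0$, and constant when $\alpha_i=0$. Continuity of $M$ at the breakpoint $q_{i+1}$ yields the recursion
\[
 \alpha_{i+1} = \alpha_i + (\rho_i - \rho_{i+1})\, q_{i+1},
\]
which is the only nontrivial combinatorial ingredient.

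For part 1, the strategy is to show by induction on $i$ that $\alpha_i>0$ for every $i=1,\dots,t-1$. The base case follows from $M(a)/a>\rho_1$: writing $M(a) = \alpha_1+\rho_1 a$ gives $\alpha_1 = a(M(a)/a-\rho_1)>0$. The inductive step is immediate from the recursion: since $\rho_i>\rho_{i+1}$ and $q_{i+1}>0$, one has $\alpha_{i+1}>\alpha_i>0$. Consequently, $M(q)/q$ is strictly decreasing on each linear piece, and by continuity strictly decreasing on all of $[a,b]$. The lower bound is then an immediate byproduct: on the final piece, $M(q)/q = \rho_{t-1}+\alpha_{t-1}/q > \rho_{t-1}$, and by monotonicity this bound holds throughout $[a,b]$.

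Part 2 follows by the symmetric, time-reversed argument: one shows $\alpha_i<0$ for every $i=1,\dots,t-1$ by backward induction. The base case is $\alpha_{t-1}<0$, obtained from $\rho_{t-1}>M(b)/b$ via $M(b)=\alpha_{t-1}+\rho_{t-1}b$. The recursion, rewritten as $\alpha_{i-1}=\alpha_i+(\rho_i-\rho_{i-1})q_i$, gives $\alpha_{i-1}<\alpha_i<0$ since $\rho_{i-1}>\rho_i$. Hence $M(q)/q$ is strictly increasing on each piece, globally on $[a,b]$, and bounded above by $\rho_1$ using the first piece.

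I do not expect a serious obstacle: the entire content of the lemma is the propagation of the sign of $\alpha_i$ through the continuity recursion, and the only small subtlety is being careful that the statement involves strict inequalities (for monotonicity) while the bounding constants $\rho_1$ and $\rho_{t-1}$ are themselves strict upper/lower bounds rather than attained values, which is transparent from the explicit form $\rho_i+\alpha_i/q$ with $\alpha_i\neq 0$.
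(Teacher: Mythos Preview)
Your proof is correct and follows essentially the same approach as the paper: both write $M(q)/q=\rho_i+\alpha_i/q$ on each linear piece and observe that the sign of the intercept $\alpha_i$ governs monotonicity. The paper organizes this as an induction on $t$ with base case $t=2$ (noting that $M(q_2)/q_2>\rho_1>\rho_2$ feeds the hypothesis on the remaining interval), whereas you induct directly on the piece index $i$ via the continuity recursion $\alpha_{i+1}=\alpha_i+(\rho_i-\rho_{i+1})q_{i+1}$; the two are equivalent repackagings of the same computation.
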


\begin{proof}
It suffices to prove this for $t=2$ because the general case
then follows by induction on $t$.  Assuming that $t=2$,
we can write
\[
 M(q)=M(a)+\rho_1(q-a)=M(b)+\rho_1(q-b) \quad (a\le q\le b).
\]
Under the hypothesis 1), we also have $M(a)>\rho_1a$ and
so $M(q)/q=\rho_1+(M(a)-\rho_1a)/q$ is a strictly decreasing
function of $q$ on $[a,b]$, bounded below by $\rho_1$.
Similarly, under the hypothesis 2), we have $M(b)<\rho_1b$,
then $M(q)/q=\rho_1+(M(b)-\rho_1b)/q$ is strictly increasing
on $[a,b]$ and bounded above by $\rho_1$.
\end{proof}

\begin{proof}[Proof of Proposition \ref{fam:prop:basic}]
As noted above, it suffices to prove the second assertion.
To this end, fix an index $j$ with $1\le j\le n-1$ and define
$M_j=P_1+\dots+P_j$.  Then $M_j$ is a continuous piecewise linear
map on $[q_1,q_{2n-1}]$ with slope $(j-i)/(n-i)$ on
$[q_i,q_{i+1}]$ for $i=1,\dots,j-1$, slope $0$ on $[q_j,q_n]$,
slope $1$ on $[q_n,q_{n+j}]$ and slope $j/i$ on
$[q_{n+i-1},q_{n+i}]$ for $i=j+1,\dots,n-1$.  Since
\[
 \frac{M_j(q_1)}{q_1}
  =\frac{j}{n}>\frac{j-1}{n-1}>\cdots>\frac{1}{n-j+1}>0
\]
and
\[
 1>\frac{j}{j+1}>\cdots>\frac{j}{n-1}>\frac{j}{n}
   =\frac{M_j(q_{2n-1})}{q_{2n-1}},
\]
we conclude from Lemma \ref{fam:lemmaM} that the ratio $M_j(q)/q$
is strictly decreasing on $[q_1,q_n]$ and strictly increasing on
$[q_n,q_{2n-1}]$.  This means that
\[
 \inf\frac{M_j(q)}{q}
  =\frac{M_j(q_n)}{q_n}
  =\frac{a_1+\cdots+a_j}{a_1+\cdots+a_n}
  =\psi_j(\ua)
 \et
 \sup \frac{M_j(q)}{q}
  =\frac{j}{n}.
 \qedhere
\]
\end{proof}

The last result of this section combines the above proposition
with the following observation.

\begin{lemma}
 \label{fam:lemma:rescaling}
If $\uP\colon [a,b]\to\bR^n$ is a generalized $n$-system with
$b>a>0$, then, for any $d>c>0$ with $d/c=b/a$, the map $\tuP
\colon [c,d]\to\bR^n$ given by
\[
 \tuP(q) = \frac{c}{a} \uP\Big(\frac{a}{c}q\Big)
 \quad \text{for each $q\in[c,d]$}
\]
is also a generalized $n$-system and we have both
\[
 \inf_{q\in[a,b]}\psi_j\big(q^{-1}\uP(q)\big)
  = \inf_{q\in[c,d]}\psi_j\big(q^{-1}\tuP(q)\big)
 \et
 \sup_{q\in[a,b]}\psi_j\big(q^{-1}\uP(q)\big)
  = \sup_{q\in[c,d]}\psi_j\big(q^{-1}\tuP(q)\big).
\]
\end{lemma}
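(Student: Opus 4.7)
The proof is a routine verification, so the plan is to unpack the definitions and rescale. First I would check that $\tuP$ is well-defined, noting that as $q$ varies over $[c,d]$ the argument $(a/c)q$ varies over $[a,b]$ since $d/c=b/a$. Then I would check the three conditions (G1), (G2), (G3) of Definition \ref{gen:def2} for $\tuP$.

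For (G1), the ordering and non-negativity of the coordinates of $\tuP(q)$ is inherited directly from $\uP$ because the scalar factor $c/a$ is positive. For the sum, one computes $\tilde P_1(q)+\cdots+\tilde P_n(q)=(c/a)\bigl(P_1((a/c)q)+\cdots+P_n((a/c)q)\bigr)=(c/a)\cdot(a/c)q=q$. For (G2) and (G3), I would use the key observation that the chain rule cancels the rescaling factors exactly: differentiating gives $\tilde P_j'(q)=(c/a)\cdot(a/c)P_j'((a/c)q)=P_j'((a/c)q)$, so $\tuP$ is piecewise linear with the same slopes as $\uP$, at points obtained from the breakpoints of $\uP$ by the affine change of variable $q\mapsto (a/c)q$. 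Thus (G2) transfers at once with the same pairs $(\rbot,\rtop)$, and at a non-differentiability point $q_0\in(c,d)$ the equalities $P_\rbot((a/c)q_0)=\cdots=P_\stop((a/c)q_0)$ given by (G3) for $\uP$ translate, after multiplying by $c/a$, into the analogous equalities for $\tuP$ at $q_0$.

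For the equality of infima and suprema, the decisive identity is
\[
 q^{-1}\tuP(q)=\frac{c}{aq}\,\uP\!\Big(\frac{a}{c}q\Big)=\Big(\frac{a}{c}q\Big)^{\!-1}\uP\!\Big(\frac{a}{c}q\Big).
\]
Since the functions $\psi_j$ depend only on this normalized point in $\bar{\Delta}^{(n)}$, we get $\psi_j\bigl(q^{-1}\tuP(q)\bigr)=\psi_j\bigl(q'^{-1}\uP(q')\bigr)$ with $q'=(a/c)q$. As $q\mapsto (a/c)q$ is a bijection from $[c,d]$ onto $[a,b]$, the two sets of values coincide, and the equalities of infima and suprema follow.

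There is no real obstacle here: the only thing to watch is that the piecewise-linear structure (breakpoints, slopes, and the coincidence pattern in (G3)) is preserved under the affine reparametrization and the homothety, which is exactly arranged by the symmetric choice of factors in $\tuP(q)=(c/a)\uP((a/c)q)$.
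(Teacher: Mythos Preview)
Your verification is correct and matches the paper's treatment: the paper does not give a formal proof of this lemma, noting only that the combined graph of $\tuP$ is the image of that of $\uP$ under uniform scaling of ratio $c/a$, which is exactly the content of your checks of (G1)--(G3) and of the identity $q^{-1}\tuP(q)=\bigl((a/c)q\bigr)^{-1}\uP\bigl((a/c)q\bigr)$.
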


The combined graph of this new map $\tuP$ is the image of
the combined graph of $\uP$ by uniform scaling of
ratio $c/a$.    Upon noting furthermore
that Lemma \ref{gen:lemma2} extends to generalized
$n$-systems, we conclude with the following result.

\begin{proposition}
 \label{fam:prop:pasting}
Let $E$ be an arbitrary non-empty subset of $\bar{\Delta}^{(n)}$.
There exists a generalized $n$-system $\uP$ on $[1,\infty)$
which, for each $j=1,\dots,n$, satisfies
\[
 \liminf_{q\to\infty} \psi_j\big(q^{-1}\uP(q)\big)
  = \inf \psi_j(E)
 \et
 \limsup_{q\to\infty} \psi_j\big(q^{-1}\uP(q)\big)
  = \frac{j}{n}.
\]
\end{proposition}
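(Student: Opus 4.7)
The plan is to construct $\uP$ by concatenating rescaled copies of the basic blocks supplied by Proposition \ref{fam:prop:basic}, placing them via Lemma \ref{fam:lemma:rescaling} and pasting them together through the natural extension of Lemma \ref{gen:lemma2} to generalized $n$-systems, with trivial fillers $q\mapsto(q/n,\dots,q/n)$ inserted to force the domain to exhaust $[1,\infty)$.

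First, set $d_j = \inf \psi_j(E)$ for $j=1,\dots,n-1$ and choose, for each such $j$, a sequence $\{\ub^{(j,k)}\}_k \subseteq E$ with $\psi_j(\ub^{(j,k)}) \to d_j$. Since $\Delta^{(n)}$ is dense in $\bar{\Delta}^{(n)}$ and the maps $\psi_1,\dots,\psi_n$ are continuous, I may replace each $\ub^{(j,k)}$ by a point $\ua^{(j,k)} \in \Delta^{(n)}$ satisfying $|\psi_i(\ua^{(j,k)}) - \psi_i(\ub^{(j,k)})| \le 1/k$ for all $i$. Then $\psi_i(\ua^{(j,k)}) \ge d_i - 1/k$ for every $i$, while $\psi_j(\ua^{(j,k)}) \to d_j$. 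Enumerate the countable family $\{\ua^{(j,k)}\}$ as a single sequence $(\uv^{(m)})_{m \ge 1}$ in $\Delta^{(n)}$, with tolerances $\epsilon_m \to 0$, arranging that for each fixed $j$ the subsequence coming from $\{\ua^{(j,k)}\}_k$ is infinite.

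Next, set $c_0 = 1$ and build $\uP$ inductively. Given $c_{m-1}$, write $\uv^{(m)} = (v_1,\dots,v_n)$, let $\rho_m = v_n/v_1 > 1$, and apply Proposition \ref{fam:prop:basic} to obtain a block on $[nv_1, nv_n]$ whose extreme values are $(v_1,\dots,v_1)$ and $(v_n,\dots,v_n)$. Rescale via Lemma \ref{fam:lemma:rescaling} to an equivalent block on $[c_{m-1}, \rho_m c_{m-1}]$, now with endpoint values $(c_{m-1}/n,\dots,c_{m-1}/n)$ and $(\rho_m c_{m-1}/n,\dots,\rho_m c_{m-1}/n)$ on the diagonal of $\bR^n$. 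Set $c_m := \rho_m c_{m-1} + 1$ and extend by the trivial system $q\mapsto(q/n,\dots,q/n)$ on $[\rho_m c_{m-1}, c_m]$, which is a valid generalized $n$-system under (G2) with $\rbot=1$, $\rtop=n$, slope $1/n$. Since all coordinates of $\uP$ agree at every junction, (G3) is vacuous there, so the concatenation is a generalized $n$-system on $[1,\infty)$; the unit-length filler forces $c_m \ge m+1$, hence $c_m \to \infty$.

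Finally I verify the asymptotics. For $\ua \in \bar{\Delta}^{(n)}$ the partial means $(a_1+\cdots+a_j)/j$ are non-decreasing in $j$, so $\psi_j \le j/n$ on $\bar{\Delta}^{(n)}$; thus $\psi_j(q^{-1}\uP(q)) \le j/n$ for all $q$. Equality is attained on every rescaled block by Proposition \ref{fam:prop:basic}, yielding $\limsup = j/n$. On the $m$-th rescaled block, $\inf_q \psi_j(q^{-1}\uP(q)) = \psi_j(\uv^{(m)}) \ge d_j - \epsilon_m$, and on each filler $\psi_j(q^{-1}\uP(q)) = j/n \ge d_j$, so $\liminf \ge d_j$; the reverse inequality follows because infinitely many $\uv^{(m)}$ come from $\{\ua^{(j,k)}\}_k$, along which $\psi_j \to d_j$. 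The case $j=n$ is automatic since $\psi_n \equiv 1 = n/n$. The main delicate point is the diagonal enumeration in step one — simultaneously arranging, for all $j \in \{1,\dots,n-1\}$, both that each $d_j$ is attained as a limit along the sequence and that the uniform lower bounds $\psi_i(\uv^{(m)}) \ge d_i - \epsilon_m$ are preserved — but this is routine bookkeeping once the approximations in $\Delta^{(n)}$ have been chosen.
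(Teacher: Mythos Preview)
Your proof is correct and follows essentially the same approach as the paper: concatenate rescaled copies of the basic blocks from Proposition~\ref{fam:prop:basic} via Lemma~\ref{fam:lemma:rescaling} and the generalized version of Lemma~\ref{gen:lemma2}. The only cosmetic differences are that the paper selects a single sequence in $\Delta^{(n)}$ whose accumulation set is $\bar{E}$ (handling all $j$ simultaneously) and argues $q_i\to\infty$ from $\limsup q_{i+1}/q_i>1$, whereas you interleave per-$j$ minimizing sequences and insert unit-length diagonal fillers to force divergence.
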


\begin{proof}
If $E$ consists of the single point $(1/n,\dots,1/n)$, we simply
take $\uP(q)=(q/n,\dots,q/n)$ for each $q\ge 1$.  Otherwise, we
choose a sequence $(\ua^{(i)})_{i\ge 1}$ of points of $\Delta^{(n)}$
whose set of accumulation points is the topological closure
$\bar{E}$ of $E$.  For each $i\ge 1$, we consider the generalized
$n$-system attached to $\ua^{(i)}$ by Proposition
\ref{fam:prop:basic}, and, starting with $q_1=1$, we use Lemma
\ref{fam:lemma:rescaling} recursively to transform it into a
generalized $n$-system $\uP^{(i)}$ on an interval of the form
$[q_i,q_{i+1}]$.  By construction, we have
\begin{equation}
  \label{fam:prop:pasting:eq}
 \inf_{q\in[q_i,q_{i+1}]} \psi_j\big(q^{-1}\uP^{(i)}(q)\big)
 = \psi_j(\ua^{(i)})
 \et
 \sup_{q\in[q_i,q_{i+1}]} \psi_j\big(q^{-1}\uP^{(i)}(q)\big)
 = \frac{j}{n}
\end{equation}
for $j=1,\dots,n$.  We also note that
\[
 \uP^{(i-1)}(q_i)=\uP^{(i)}(q_i)=(q_i/n,\dots,q_i/n)
 \quad (i\ge 2)
\]
and that $\limsup q_{i+1}/q_i >1$ since $E$ contains at least
one point $(a_1,\dots,a_n)$ with $a_n>a_1$.  Thus $\lim q_i=\infty$
and so there exists
a unique generalized $n$-system $\uP$ on $[1,\infty)$ which
restricts to $\uP^{(i)}$ on $[q_i,q_{i+1}]$ for each $i\ge 1$.
In view of \eqref{fam:prop:pasting:eq}, we obtain
\[
 \liminf_{q\to\infty} \psi_j\big(q^{-1}\uP(q)\big)
  = \liminf_{q\to\infty} \psi_j(\ua^{(i)})
  = \inf \psi_j(E)
  \et
  \limsup_{q\to\infty} \psi_j\big(q^{-1}\uP(q)\big)= \frac{j}{n}
\]
for each $j=1,\dots,n$, as requested.
\end{proof}

%
%

\section{Proof of the main result}
 \label{sec:proof}

The next result is the last piece that we need in order to
prove Theorem \ref{results:thm:main}.

\begin{proposition}
 \label{proof:prop1}
Let $n\in\bN^*$. Suppose that $\psibot_1,\dots,\psibot_n
\in [0,1]$ satisfy $\psibot_n\le n/(n+1)$,
\begin{equation}
 \label{proof:prop1:eq}
 \frac{\psibot_j}{j} \le \frac{\psibot_{j+1}}{j+1}
 \et
 \frac{1-\psibot_j}{n+1-j} \le \frac{1-\psibot_{j+1}}{n-j}
 \quad
 (1\le j\le n-1).
\end{equation}
Then, there exists a finite non-empty subset $E$ of
$\bar{\Delta}^{(n+1)}$ such that
\[
 \psibot_j=\min \psi_j(E) \quad (1\le j\le n).
\]
\end{proposition}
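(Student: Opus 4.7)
The plan is to construct $E$ explicitly as a finite collection of at most $n$ points in $\bar{\Delta}^{(n+1)}$, one for each index $j\in\{1,\dots,n\}$, where the $j$-th point attains the minimum $\psibot_j$ of $\psi_j$ and takes value at least $\psibot_k$ under every other $\psi_k$. The natural candidate is the ``flat'' point
\[
 \ua^{(j)} = \Big( \underbrace{\tfrac{\psibot_j}{j},\,\ldots,\,\tfrac{\psibot_j}{j}}_{j \text{ entries}},\, \underbrace{\tfrac{1-\psibot_j}{n+1-j},\,\ldots,\,\tfrac{1-\psibot_j}{n+1-j}}_{n+1-j \text{ entries}} \Big),
\]
which satisfies $\psi_j(\ua^{(j)})=\psibot_j$ by construction. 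I then set $E=\{\ua^{(1)},\dots,\ua^{(n)}\}$ and aim to show that $\min\psi_j(E)=\psibot_j$ for each $j$.

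The first step is to verify that $\ua^{(j)}\in\bar{\Delta}^{(n+1)}$. Since its coordinates are non-negative and sum to $1$ by construction, only the monotonicity $\psibot_j/j\le (1-\psibot_j)/(n+1-j)$ needs checking; this is equivalent to $\psibot_j\le j/(n+1)$. For $j=n$ this is the hypothesis $\psibot_n\le n/(n+1)$, and for $j<n$ it follows by iterating the first chain in \eqref{proof:prop1:eq} to obtain $\psibot_j/j\le \psibot_n/n\le 1/(n+1)$.

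The second step is to verify that $\psi_k(\ua^{(j)})\ge \psibot_k$ for all $k\ne j$, which I would split into two cases. When $k<j$, one has $\psi_k(\ua^{(j)})=k\psibot_j/j$, so the required inequality becomes $\psibot_k/k\le \psibot_j/j$; this follows by iterating the first chain in \eqref{proof:prop1:eq}. When $k>j$, a direct calculation gives $1-\psi_k(\ua^{(j)})=(1-\psibot_j)(n+1-k)/(n+1-j)$, reducing the required inequality to $(1-\psibot_j)/(n+1-j)\le (1-\psibot_k)/(n+1-k)$, which follows by iterating the second chain in \eqref{proof:prop1:eq}.

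There is no serious obstacle: once one observes that the two chains in \eqref{proof:prop1:eq} are precisely the compatibility conditions expressing ``$\psibot_j/j$ is increasing in $j$'' and ``$(1-\psibot_j)/(n+1-j)$ is increasing in $j$'', the flat points $\ua^{(j)}$ are essentially forced, and both the membership in $\bar{\Delta}^{(n+1)}$ and the inequalities $\psi_k(\ua^{(j)})\ge \psibot_k$ drop out by the same chains. The main conceptual point is recognizing that these hypotheses are exactly of the shape needed to make the construction work; the rest is routine arithmetic.
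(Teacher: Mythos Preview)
Your proof is correct and follows essentially the same approach as the paper: construct $E$ from explicit ``flat'' points of $\bar{\Delta}^{(n+1)}$ and verify the required inequalities via the two monotonicity chains in \eqref{proof:prop1:eq}. The only difference is cosmetic: the paper uses $n-1$ three-level points of the form $(c,\dots,c,\,\psibot_{k+1}-\psibot_k,\,d,\dots,d)$ with $c=\psibot_k/k$ and $d=(1-\psibot_{k+1})/(n-k)$, each of which attains the minimum simultaneously at $j=k$ and $j=k+1$, whereas your two-level points $\ua^{(j)}$ each attain it at a single index.
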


\begin{proof}
If $n=1$, we have $\psibot_1\le 1/2$ and we simply take
$E=\{(\psibot_1,1-\psibot_1)\}$.  Suppose from now on that
$n\ge 2$ and let $k\in\{1,\dots,n-1\}$.  We claim that
there exists $\ua\in\bar{\Delta}^{(n+1)}$ such that
$\psi_j(\ua)\ge \psibot_j$ for $j=1,\dots,n$, with equality
for $j=k$ and $j=k+1$.  To show this, set $c=\psibot_k/k$ and
$d=(1-\psibot_{k+1})/(n-k)$, and consider the point
\[
 \ua
  = \big(
     \overbrace{c,\dots,c}^{\text{$k$ times}},\
     \psibot_{k+1}-\psibot_k,
     \overbrace{d,\dots,d}^{\text{$n-k$ times}}
    \big)
  \in \bR^{n+1}.
\]
By hypothesis, we have $c\ge 0$ and the inequalities
\eqref{proof:prop1:eq} for $j=k$ yield
$c \le \psibot_{k+1}-\psibot_k \le d$.  As the sum of
the coordinates of $\ua$ is $1$, this means that
$\ua\in\bar{\Delta}^{(n+1)}$.  We also note from
\eqref{proof:prop1:eq} that the ratios $\psibot_j/j$
and $(1-\psibot_j)/(n+1-j)$ are both monotone increasing
functions of $j$ for $j\in\{1,\dots,n\}$.  Thus,
for $j=1,\dots,k$, we have $\psibot_j/j\le c$
and so we find $\psi_j(\ua)=jc \ge \psibot_j$, with
equality if $j=k$.  Similarly, for $j=k+1,\dots,n$,
we have $d\le (1-\psibot_j)/(n+1-j)$ and so we
obtain $\psi_j(\ua)=1-(n+1-j)d \ge \psibot_j$,
with equality if $j=k+1$.  This proves our claim.
By varying $k$, this produces a finite set of points
with the required property.
\end{proof}

In general, we cannot expect that the set $E$ consists
of a single element because each $\ua=(a_1,\dots,a_{n+1})$
in $\bar{\Delta}^{(n+1)}$ satisfies
\[
 \psi_{j-1}(\ua)+\psi_{j+1}(\ua)-2\psi_j(\ua)
  = a_{j+1}-a_j
  \ge 0
 \quad
 (2\le j\le n),
\]
and so, for example, there is no point $\ua$ in
$\bar{\Delta}^{(4)}$ with $\psi_1(\ua)=0$, $\psi_2(\ua)=1/3$
and $\psi_3(\ua)=1/2$, although the numbers $\psibot_1=0$,
$\psibot_2=1/3$ and $\psibot_3=1/2$ satisfy the conditions
\eqref{proof:prop1:eq} for $n=3$.

\begin{proof}[Proof of Theorem \ref{results:thm:main}]
Suppose that $\omega_0,\dots,\omega_{n-1}\in[0,\infty]$
satisfy the conditions of the theorem.  Then the numbers
\[
 \psibot_j = \frac{1}{\omega_{n-j}+1} \in [0,1]
 \quad (1\le j\le n),
\]
fulfill the hypotheses of the
above proposition.  Let $E$ be a corresponding subset of
$\bar{\Delta}^{(n+1)}$, as in that proposition, and let $\uP$
be a generalized $(n+1)$-system on $[1,\infty)$ as constructed
by Proposition \ref{fam:prop:pasting} for this choice of $E$.
For $j=1,\dots,n$, we have
\[
 \liminf_{q\to\infty} \psi_j\big(q^{-1}\uP(q)\big)
  = \inf \psi_j(E) = \psibot_j
 \et
 \limsup_{q\to\infty} \psi_j\big(q^{-1}\uP(q)\big)
  = \frac{j}{n+1}.
\]
Corollary \ref{gen:cor} and Theorem \ref{gen:thm:nsys} provide
a point $\uu\in\bR^{n+1}\setminus\{0\}$ for which
the difference $\uP-\uL_\uu$ is bounded on $[1,\infty)$.
By the above, and by definition of the quantities
$\psibot_j(\uu)$ and $\psitop_j(\uu)$ in
Section \ref{sec:link}, this point satisfies
\[
 \psibot_j(\uu)=\psibot_j
 \et
 \psitop_j(\uu)=\frac{j}{n+1}
 \quad
 (1\le j\le n).
\]
By Proposition \ref{link:prop:omega}, this means that,
as requested, we have, for $j=0,\dots,n-1$,
\[
 \omega_j(\uu)
  =\frac{1}{\psibot_{n-j}(\uu)}-1
  =\omega_j
 \et
 \homega_j(\uu)
  =\frac{1}{\psitop_{n-j}(\uu)}-1
  =\frac{j+1}{n-j}.
\]
Since $\homega_{n-1}(\uu)=n<\infty$, the coordinates of
$\uu$ must be linearly independent over $\bQ$.
\end{proof}

\medskip
As a complement, note that each point $\ua=(a_1,\dots,a_{n+1})$
in $\bar{\Delta}^{(n+1)}$ satisfies
\begin{equation}
 \label{proof:complement:eq}
 0 \le \frac{\psi_j(\ua)}{j} \le \frac{\psi_{j+1}(\ua)}{j+1}
 \et
 0 \le \frac{1-\psi_j(\ua)}{n+1-j} \le \frac{1-\psi_{j+1}(\ua)}{n-j}
 \quad (1\le j\le n-1).
\end{equation}
Based on this, Theorem \ref{results:thm:SL} can be given the
following short proof.  Let $\uu\in\bR^{n+1}\setminus\{0\}$.
By Theorem \ref{gen:thm:nsys}, there exists
$q_0>0$ and an $(n+1)$-system $\uP$ on $[q_0,\infty)$ such that
$\uP-\uL_\uu$ is bounded on $[q_0,\infty)$.  For such a choice
of $\uP$, we have
\[
 \psibot_j(\uu)
  = \liminf_{q\to\infty} \psi_j\big(q^{-1}\uP(q)\big)
 \quad
 (1\le j\le n).
\]
Since $q^{-1}\uP(q)$ belongs to $\bar{\Delta}^{(n+1)}$ for
each $q\ge q_0$, the inequalities \eqref{proof:complement:eq}
apply to these points.  They imply that
\[
 0 \le \frac{\psibot_j(\uu)}{j}
   \le \frac{\psibot_{j+1}(\uu)}{j+1}
 \et
 0 \le \frac{1-\psibot_j(\uu)}{n+1-j}
   \le \frac{1-\psibot_{j+1}(\uu)}{n-j}
 \quad (1\le j\le n-1),
\]
and the conclusion follows using Proposition
\ref{link:prop:omega} to translate these estimates in terms
of $\omega_0(\uu),\dots,\omega_{n-1}(\uu)$.


\end{document}